\numberwithin{equation}{section}
\theoremstyle{plain}
\newtheorem{Th}{Theorem}[section]
\newtheorem{Lemma}[Th]{Lemma}
\newtheorem{Prop}[Th]{Proposition}
 \theoremstyle{definition}
\newtheorem{Def}[Th]{Definition}
\newtheorem{Conj}[Th]{Conjecture}
\newtheorem{Rem}[Th]{Remark}
\newtheorem{?}[Th]{Problem}
\newcommand{\E}{\mathbb{E}}
\newcommand{\ch}{\mathrm{ch}}
\renewcommand{\P}{\mathbb{P}}
\begin{document}

\title[Forests and connected spanning subgraphs]{On the number of forests and connected spanning subgraphs}

\author[M. Borb\'enyi]{M\'arton Borb\'enyi}

\address{ELTE: E\"{o}tv\"{o}s Lor\'{a}nd University \\ H-1117 Budapest
\\ P\'{a}zm\'{a}ny P\'{e}ter s\'{e}t\'{a}ny 1/C}

\email{marton.borbenyi@gmail.com}

\author[P. Csikv\'ari]{P\'{e}ter Csikv\'{a}ri}

\address{Alfr\'ed R\'enyi Institute of Mathematics, H-1053 Budapest Re\'altanoda utca 13/15 \and ELTE: E\"{o}tv\"{o}s Lor\'{a}nd University \\ Mathematics Institute, Department of Computer
Science \\ H-1117 Budapest
\\ P\'{a}zm\'{a}ny P\'{e}ter s\'{e}t\'{a}ny 1/C}

\email{peter.csikvari@gmail.com}

\author[H. Luo]{Haoran Luo}

\address{Department of Mathematics, University of Illinois at Urbana-Champaign, Urbana, Illinois 61801, USA}

\email{haoranl8@illinois.edu}

\thanks{The first  author was partially supported by the EFOP program (EFOP-3.6.3-VEKOP-16-2017-00002) and the New National Excellence Program (\'UNKP) when the project started. 
The second author  is supported by the Counting in Sparse Graphs Lend\"ulet Research Group. When the project started he was also supported by the
 Marie Sk\l{}odowska-Curie Individual Fellowship grant no. 747430.
}

 \subjclass[2010]{Primary: 05C30. Secondary: 05C31, 05C70}

 \keywords{forests, connected spanning subgraphs, acyclic orientations}

\begin{abstract} Let $F(G)$ be the number of forests of a graph $G$. Similarly let $C(G)$ be the number of connected spanning subgraphs of a connected graph $G$. We bound $F(G)$ and $C(G)$ for regular graphs and for graphs with a fixed average degree. Among many other things we  study $f_d=\sup_{G\in \mathcal{G}_d}F(G)^{1/v(G)}$, where $\mathcal{G}_d$ is the family of $d$-regular graphs, and $v(G)$ denotes the number of vertices of a  graph $G$. We show that $f_3=2^{3/2}$, and if $(G_n)_n$ is a sequence of $3$-regular graphs with the length of the shortest cycle tending to infinity, then $\lim_{n\to \infty}F(G_n)^{1/v(G_n)}=2^{3/2}$. We also improve on the previous best bounds on $f_d$ for $4\leq d\leq 9$. 

\end{abstract}

\maketitle

\section{Introduction}
For a graph $G=(V,E)$ let $T_G(x,y)$ denote its Tutte polynomial, that is,
$$T_G(x,y)=\sum_{A\subseteq E}(x-1)^{k(A)-k(E)}(y-1)^{k(A)-|A|-v(G)},$$
where $k(A)$ denotes the number of connected components of the graph $(V,A)$, and $v(G)$ denotes the number of vertices of the graph $G$. It is well-known that special evaluations of the Tutte polynomial have various combinatorial meaning. For instance, $T_G(1,1)$ counts the number of spanning trees for a connected  graph $G$. Similarly, $T_G(2,1)$ enumerates the number of forests (acyclic edge subsets), and for a connected graph $G$ the evaluation $T_G(1,2)$ is equal to the number of connected spanning subgraphs, and $T_G(2,0)$ is the number of acyclic orientations of $G$. In this paper, we use the notation $F(G)=T_G(2,1)$  for the number of forests,  $C(G)=T_G(1,2)$ for the number of connected spanning subgraphs, and $a(G)=T_G(2,0)$ for the number of acyclic orientations.

The scope of this paper is to give various upper bounds for $F(G)$ and $C(G)$ in terms of the average degree. A special emphasis is put on the case when $G$ is a regular graph of degree $d$. 

\subsection{Number of forests} First, we collect our results for the number of forests.

The following statement is well-known and serves as a motivation for many of our results. For the sake of completeness, we will give a proof of it.

\begin{Prop}[\cite{Thom}] \label{product-forest}
Let $G$ be a graph, and let $d_v$ be the degree of a vertex $v$. Then
$$F(G)\leq \prod_{v\in V(G)}(d_v+1).$$
\end{Prop}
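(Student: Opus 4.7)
The plan is to exhibit an injection from the set of forests of $G$ into $\prod_{v\in V(G)} (E(v) \cup \{*\})$, where $E(v)$ denotes the set of edges incident to $v$. Since $|E(v) \cup \{*\}| = d_v + 1$, this immediately yields the claimed inequality.

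To build the injection, I would first fix an arbitrary linear ordering $v_1 < v_2 < \cdots < v_n$ of $V(G)$. Given a forest $F \subseteq E(G)$, I canonically root each tree component of $(V,F)$ at its smallest-indexed vertex, and orient every edge of $F$ toward the root of its component. In the resulting oriented forest, every non-root vertex has out-degree exactly one, and every root has out-degree zero. I then define $\varphi_F: V(G) \to E(G) \cup \{*\}$ by setting $\varphi_F(v)$ equal to the unique out-edge at $v$ if $v$ is not a root, and $\varphi_F(v) = *$ otherwise. By construction $\varphi_F(v) \in E(v) \cup \{*\}$ for every $v$.

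The key verification is that $\varphi_F$ determines $F$, i.e. the map $F \mapsto \varphi_F$ is injective. This is essentially immediate once the orientation is set up correctly: as we orient edges toward the root in each tree, every edge of $F$ appears as the out-edge of exactly one of its endpoints (namely the endpoint farther from the root). Consequently
\[
F \;=\; \bigl\{\varphi_F(v) : v \in V(G),\ \varphi_F(v)\neq *\bigr\},
\]
so $\varphi_F$ can be inverted on its image, proving injectivity.

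Once injectivity is established, the counting is routine: the number of possible functions $\varphi: V(G)\to E(G)\cup\{*\}$ with $\varphi(v)\in E(v)\cup\{*\}$ is exactly $\prod_{v\in V(G)}(d_v+1)$, and $F(G)$ is bounded by this product. The only conceptual hurdle is the choice of a canonical rooting, which is needed to make the forest-to-function assignment well-defined; any deterministic rule (such as smallest-index root per component) suffices, and the argument does not require the image of $\varphi$ to be easy to describe — only that the map be injective.
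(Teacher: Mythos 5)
Your proof is correct, and it takes a route that differs from both arguments in the paper. The paper's first proof is by induction on edges via the deletion--contraction recursion $F(G)=F(G-e)+F(G/e)$; its second proof invokes an unpublished theorem of Stanley (with a bijective proof by Kleitman and Winston) that the number of distinct score vectors of orientations of $G$ equals $F(G)$, and then observes that the out-degree of $v$ lies in $\{0,1,\dots,d_v\}$. Your argument instead constructs a direct, self-contained injection from forests into $\prod_{v}\left(E(v)\cup\{*\}\right)$ by canonically rooting each component and recording each non-root vertex's unique out-edge; the verification that every edge of the forest is the out-edge of exactly one endpoint (the endpoint farther from the root) is exactly what makes the map invertible on its image, and the choice of a deterministic rooting rule makes it well-defined. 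What your approach buys is elementarity: it needs no recursion and no external theorem. What it gives up relative to the paper's proofs is reusability --- the deletion--contraction proof is the template the authors reuse later (e.g.\ in the proof of Theorem~\ref{forest-cover}), and the score-vector proof identifies forests exactly (not just injectively) with a combinatorially meaningful set. Your argument is closest in spirit to the second proof --- both bound $F(G)$ by counting vertex-indexed functions with $d_v+1$ choices at $v$ --- but yours replaces the cited equality with an explicit injection, which is all the inequality requires.
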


When one applies Proposition~\ref{product-forest} to regular graphs of degree $3$ and $4$, it turns out to be rather poor since the trivial inequality in terms of the number of edges $e(G)$, that is, $F(G)\leq 2^{e(G)}$ gives stronger results. Indeed, for a $3$-regular graph this trivial inequality gives $F(G)^{1/v(G)}\leq 2\sqrt{2}$, while for a $4$-regular graph it gives $F(G)^{1/v(G)}\leq 4$. Surprisingly, this inequality cannot be improved for $3$-regular graphs as the following result shows. Let $g(G)$ be the length of the shortest cycle, which is called the girth of the graph $G$.

\begin{Th} \label{3-regular-forest}
Let $(G_n)_n$ be a sequence of $3$-regular graphs with girth $g(G_n)\to \infty$. Then
$$\lim_{n\to \infty}a(G_n)^{1/v(G_n)}=2\sqrt{2},$$
and
$$\lim_{n\to \infty}F(G_n)^{1/v(G_n)}=2\sqrt{2}.$$
In particular, $f_3=2\sqrt{2}$.
\end{Th}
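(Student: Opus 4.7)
The plan is to sandwich both limits with matching bounds. The upper bound is immediate, so the content lies in the matching lower bound, with the main obstacle appearing in the last step.

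\emph{Upper bound.} For any graph $G$, trivially $F(G) \leq 2^{e(G)}$ and $a(G) \leq 2^{e(G)}$, since each counts a subset of the $2^{e(G)}$ possible edge states (present/absent) or orientations (two per edge). For $3$-regular $G$, $e(G) = 3v(G)/2$, so $F(G)^{1/v(G)}, a(G)^{1/v(G)} \leq 2^{3/2} = 2\sqrt{2}$. This yields $f_3 \leq 2\sqrt{2}$ as well as the $\limsup$ direction for both limits.

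\emph{Reducing $F$ to $a$.} I would first reduce the lower bound for $F$ to that for $a$ via the universal inequality $F(G) \geq a(G)$, proved by induction on $e(G)$ from the deletion--contraction recursion. At a non-bridge non-loop edge $e$, both $F$ and $a$ satisfy $X(G) = X(G-e) + X(G/e)$; at a bridge, both satisfy $X(G) = 2 X(G/e)$; at a loop, $F(G) = F(G-e)$ while $a(G) = 0$. The base case $F = a = 1$ for the edgeless graph then propagates the inequality. Hence it suffices to show $\liminf a(G_n)^{1/v(G_n)} \geq 2\sqrt{2}$.

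\emph{Lower bound for $a$.} The goal becomes $\log(2^{e(G_n)}/a(G_n)) = o(v(G_n))$: on the exponential scale, almost all orientations of $G_n$ are acyclic. The natural framework is Benjamini--Schramm local convergence: since $g(G_n) \to \infty$, the sequence $(G_n)$ converges locally to the $3$-regular tree $T_3$, on which every orientation is acyclic. Because $T_3$ has $3/2$ edges per vertex and each edge can be oriented freely, the limiting orientation entropy per vertex is $(3/2)\log 2 = \log(2\sqrt{2})$, matching the target. Via Stanley's reciprocity $a(G) = (-1)^{v(G)} \chi_G(-1)$, the task reduces to a continuity statement for $\log|\chi_G(-1)|/v(G)$ under local convergence on $3$-regular graphs.

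\emph{Main obstacle.} The hard step is this quantitative continuity, because acyclicity is a genuinely global condition. The cheap combinatorial estimate $c_k(G) \leq c \cdot n \cdot 2^{k-2}/k$ for the number of $k$-cycles, plugged into a union/FKG bound, yields a contribution of order $\sum_{k \geq g} n/k \approx n\log(n/g)$ to the deficiency $\log(2^{e(G)}/a(G))$, and this is \emph{not} $o(n)$ because Moore's bound forces $g = O(\log n)$. A rigorous proof must therefore go beyond a term-by-term cycle count; it must either control correlations between cycles analytically (for instance through the antiferromagnetic Potts partition function in a complex neighborhood of $q = -1$, via Dobrushin uniqueness or polymer/cluster methods), or reorganize the combinatorial data to absorb interacting cycles into a larger local structure whose total contribution is $o(n)$.
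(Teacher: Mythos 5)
Your upper bound and the reduction of $F$ to $a$ via $F(G)\ge a(G)$ match the paper (which uses $a(G)\le F(G)\le 2^{e(G)}$), but the heart of the theorem --- the lower bound $\liminf a(G_n)^{1/v(G_n)}\ge 2\sqrt2$ --- is left unproved. You correctly diagnose why the naive route fails (a union bound over cycles contributes $\sum_{k\ge g} n/k \approx n\log(n/g)$ to the deficiency, which is not $o(n)$ since $g=O(\log n)$), but you then only gesture at heavy machinery (cluster expansions, Dobrushin uniqueness) without carrying anything out. As it stands this is a genuine gap: the proposal proves $f_3\le 2\sqrt2$ and nothing more.

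The paper's resolution is elementary and sidesteps the global-acyclicity issue entirely. Using Whitney's broken-cycle expansion of the chromatic polynomial and Stanley's $a(G)=|\ch(G,-1)|$, one has $a(G)=\sum_k c_k(G)$, the number of edge subsets containing no broken cycle; this is bounded below by the number $F_{wi}(G)$ of \emph{weakly induced forests} (acyclic subsets $S$ whose components induce no edges outside $S$), since such sets contain no cycle-minus-an-edge under any labeling. The key point is that ``$A$ is a weakly induced forest'' is an intersection over vertices $v$ of \emph{local, monotone decreasing} events $\{X_v=1\}$ ($v$ lies in a weakly induced tree), so the FKG inequality for the uniform measure on $\{0,1\}^{E(G)}$ gives
\begin{equation*}
2^{-3n/2}F_{wi}(G)=\mathbb{E}\Bigl[\prod_v X_v\Bigr]\ \ge\ \prod_v \mathbb{E}[X_v].
\end{equation*}
When $g(G)\ge 2k+1$, the $R$-neighborhood of $v$ (with $R=k-1$) is a tree, and $\mathbb{E}[X_v]\ge 1-3p_{R-1}$, where $p_t$ is the probability that the root of a binary tree of depth $t$ is joined to a leaf by a uniform random edge subset; the recursion $p_t=p_{t-1}-\tfrac14 p_{t-1}^2$ forces $p_t\to0$. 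Hence the per-vertex deficiency tends to $0$ as the girth grows, giving $F_{wi}(G_n)^{1/v(G_n)}\ge 2\sqrt2\,(1-3p_{R-1})\to 2\sqrt2$. In short: the missing idea is to replace the global event ``the orientation is acyclic'' by a product of local events via the chain $F_{wi}\le a\le F$ and FKG, rather than to control cycle correlations directly. If you want to salvage your outline, this is the step you need to supply.
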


Note that the large girth requirement is necessary in the following sense. Suppose that for a fixed $k$ and $\varepsilon$ the graph $G$ contains at least $\varepsilon v(G)$ edge-disjoint cycles of length at most $k$. Then
$$F(G)^{1/v(G)}\leq c(k,\varepsilon)2^{3/2},$$
where $c(k,\varepsilon)<1$. Indeed, if $C_1,C_2,\dots ,C_r$ are edge-disjoint cycles of length at most $k$, then
$$F(G)\leq \prod_{i=1}^r(2^{|C_i|}-1)\cdot 2^{e(G)-\sum_{i=1}^r|C_i|}=\prod_{i=1}^r\frac{2^{|C_i|}-1}{2^{|C_i|}
}\cdot 2^{e(G)}\leq \left(\frac{2^k-1}{2^k}\right)^{\varepsilon v(G)}\cdot 2^{e(G)}.$$
Hence
$$F(G)^{1/v(G)}\leq \left(\frac{2^k-1}{2^k}\right)^{\varepsilon}\cdot 2^{3/2}.$$
We remark that the ratio $\frac{F(G)}{2^{e(G)}}$ can be rather large for a $3$-regular graph. For instance, for the Tutte-Coxeter graph this ratio is roughly $0.728$. Note that this is a $3$-regular graph on $30$ vertices with girth $8$. It seems that for cages, that is, for regular graphs that have minimal size for a given degree and girth, this ratio can be quite large. This motivates the following question.

\begin{?} Let $\mathcal{G}_3$ be the family of $3$-regular graphs. Is it true that
$$\sup_{G\in \mathcal{G}_3}\frac{F(G)}{2^{e(G)}}=1?$$
\end{?}

Before we turn our attention to $4$-regular graphs, let us give one more general upper bound for the number of forests. Let us introduce the entropy function
$$H(x):=x\ln \left(\frac{1}{x}\right)+(1-x)\ln \left(\frac{1}{1-x}\right)$$
 with the usual convention $H(0)=H(1)=0$. In the proofs, we will often use the following inequality. If $n\leq m/2$, then
 $$\sum_{k=0}^n\binom{m}{k}\leq \exp \left(mH\left(\frac{n}{m}\right)\right).$$
 The proof of this inequality can be found in \cite{AlSp}.
\medskip

\begin{Prop} \label{average-forest} Let $G$ be a graph with average degree $\overline{d}$. If $\overline{d}\geq 4$, then
$$F(G)^{1/v(G)}\leq \exp\left(\frac{\overline{d}}{2}H\left(2/\overline{d}\right)\right).$$
\end{Prop}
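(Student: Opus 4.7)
The plan is to bound $F(G)$ by the crude count of edge subsets that \emph{could} possibly be a forest, namely all edge subsets of size at most $v(G)-1$, and then apply the entropy estimate recalled just before the proposition. The hypothesis $\overline{d}\geq 4$ will turn out to be exactly what is needed for this entropy bound to be applicable.

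Concretely, set $n=v(G)$ and $m=e(G)$, so $m=n\overline{d}/2$. Every forest in $G$ is acyclic, hence spans at most $n-1$ edges, so
$$F(G)\leq \sum_{k=0}^{n-1}\binom{m}{k}\leq \sum_{k=0}^{n}\binom{m}{k}.$$
The assumption $\overline{d}\geq 4$ is equivalent to $m\geq 2n$, that is, $n\leq m/2$. This is precisely the hypothesis under which the entropy estimate
$$\sum_{k=0}^{n}\binom{m}{k}\leq \exp\left(m\,H\!\left(\frac{n}{m}\right)\right)$$
applies.

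Substituting $m=n\overline{d}/2$ and $n/m=2/\overline{d}$ gives
$$F(G)\leq \exp\left(\frac{n\overline{d}}{2}H\!\left(\frac{2}{\overline{d}}\right)\right),$$
and taking $n$-th roots yields the desired inequality.

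There is no real obstacle here; the proof is essentially a one-line combination of the trivial upper bound $|F|\leq n-1$ on the size of a forest with the entropy estimate. The only thing to note is that the threshold $\overline{d}\geq 4$ in the hypothesis is not an artifact: it is exactly the regime $n\leq m/2$ in which the partial binomial sum up to $n$ is controlled by $\exp(mH(n/m))$, so the method cannot be pushed below this threshold without modification.
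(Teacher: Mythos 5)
Your proof is correct and follows essentially the same route as the paper's: bound $F(G)$ by the number of edge subsets of size at most $v(G)-1$ and apply the entropy estimate. The only cosmetic difference is that you enlarge the sum to run up to $n$ before invoking the entropy bound, whereas the paper applies it with upper index $n-1$ and then uses the monotonicity of $H$ on $(0,1/2)$; both steps are equally valid.
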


This simple inequality is based on the rather trivial observation that a forest can have at most $v(G)-1$ edges, and so
$$F(G)\leq \sum_{k=0}^{v(G)-1}\binom{e(G)}{k}.$$
The problem with this bound is that if the average degree is exactly $4$, this is not much different from the trivial upper bound $2^{e(G)}=4^{v(G)}$. Already Merino and  Welsh \cite{MeWe} noted that they found rather challenging to improve on this trivial bound even for grids. Nevertheless, $4^{v(G)}$ is definitely not the best answer for $4$-regular graphs as the following theorem shows.

\begin{Th} \label{4-regular-forest}
Let $\mathcal{G}_4$ be the family of $4$-regular graphs. Then
$$\sup_{G\in \mathcal{G}_4}F(G)^{1/v(G)}<3.994.$$
\end{Th}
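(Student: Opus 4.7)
The plan is to improve strictly on the trivial bound $F(G) \leq 2^{e(G)} = 4^{v(G)}$, which is essentially what Proposition~\ref{average-forest} reduces to when $\overline{d}=4$. The improvement has to combine two separate sources of savings.

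First, the size argument: since every forest has at most $v(G)-1 < v(G) = e(G)/2$ edges,
$$F(G) \leq \sum_{k=0}^{v(G)-1}\binom{2v(G)}{k} = \frac{1}{2}\left(4^{v(G)} - \binom{2v(G)}{v(G)}\right).$$
By Stirling, $\binom{2n}{n} = \Theta(4^n/\sqrt{n})$, so $F(G)^{1/v(G)} \leq 4 \cdot 2^{-1/v(G)}(1-o(1))$, which is strictly below $3.994$ as long as $v(G)$ does not exceed an explicit threshold $N_0$ (numerically a few hundred). Second, the cycle argument: for any collection $C_1,\dots,C_r$ of pairwise edge-disjoint cycles of $G$,
$$F(G) \leq 2^{e(G)} \prod_{i=1}^r \bigl(1-2^{-|C_i|}\bigr) = 4^{v(G)} \prod_{i=1}^r \bigl(1-2^{-|C_i|}\bigr),$$
by exactly the computation already used in the paper just before Proposition~\ref{average-forest}.

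The strategy is then a case split. For $v(G) \leq N_0$ the size argument alone already gives $F(G)^{1/v(G)}<3.994$. For $v(G) > N_0$ one invokes the cycle argument: by a Moore-type estimate, a $4$-regular graph has $g(G) = O(\log v(G))$, so $G$ contains short cycles. The key step is to extract \emph{many} pairwise edge-disjoint short cycles. I would proceed greedily: find a short cycle, remove its edges (possibly restricting to deletions that preserve $4$-regularity by working inside a $2$-factor, which exists by Petersen's theorem since $G$ is $4$-regular), and re-apply the Moore bound to the remaining graph. If this produces $\Omega(v(G))$ edge-disjoint cycles of length at most some fixed $\ell_0$, then the cycle bound yields a factor $(1-2^{-\ell_0})^{\Omega(v(G))}$, and $\ell_0$ can be chosen so that this factor is below $(3.994/4)^{v(G)}$.

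The main obstacle is calibrating the two regimes so they jointly cover every $4$-regular $G$: one must pick $N_0$ and $\ell_0$ consistently, and prove the quantitative lemma that \emph{every $4$-regular graph contains at least $c\cdot v(G)$ pairwise edge-disjoint cycles of length at most $\ell_0$}, with $c$ and $\ell_0$ satisfying $(1-2^{-\ell_0})^{c}<3.994/4$. Showing that short cycles persist after successive deletions (or that a $2$-factor of $G$ always contains sufficiently many short cycles) is the technical crux, and the specific numerical constant $3.994$ comes out of the optimization over $\ell_0$ and $c$.
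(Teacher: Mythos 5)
The large-$v(G)$ half of your argument cannot be repaired, because the quantitative lemma you correctly identify as the crux --- that every $4$-regular graph contains at least $c\cdot v(G)$ pairwise edge-disjoint cycles of length at most a fixed $\ell_0$ --- is false. There exist $4$-regular graphs with girth tending to infinity, which contain \emph{no} cycle of length at most $\ell_0$ once the girth exceeds $\ell_0$; more generally, a random $4$-regular graph has only $O(1)$ cycles of any fixed bounded length. The Moore bound only supplies cycles of length $\Theta(\log n)$, and the per-cycle saving $1-2^{-\Theta(\log n)}=1-n^{-\Theta(1)}$, raised to any power that an edge-disjoint packing could achieve, tends to $1$ and gives no exponential gain. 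Passing to a Petersen $2$-factor does not help either: the $2$-factor may consist of a single Hamiltonian cycle. The hard case for this theorem is precisely the large-girth $4$-regular graph, where all cycle-packing savings vanish --- this is exactly the phenomenon that makes the bound $2^{3/2}$ tight in Theorem~\ref{3-regular-forest} for $d=3$, so any proof of Theorem~\ref{4-regular-forest} must find a source of savings that survives large girth.

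The missing idea is to control the large forests not by cycle exclusion but by counting spanning trees. The paper's proof splits $F(G)$ for connected $G$ by the number of edges of the forest: forests with at most $(1-\varepsilon)n$ edges number at most $\exp\left(2nH\left(\frac{1-\varepsilon}{2}\right)\right)$ (the same entropy estimate underlying your small-$n$ computation), while a forest with more than $(1-\varepsilon)n$ edges is a spanning tree with at most $\varepsilon n$ edges deleted, hence there are at most $\tau(G)\exp(nH(\varepsilon))$ of them. McKay's theorem gives $\tau(G)\leq \frac{c_4\ln n}{n}\left(\frac{27}{8}\right)^n$ for $4$-regular graphs, and $27/8=3.375$ is far enough below $4$ that the choice $\varepsilon=0.04$ yields $F(G)\leq C\cdot 3.994^n$; the absolute constant $C$ is then removed via the $2$-cover trick of Lemma~\ref{special cover}, and disconnected graphs follow from multiplicativity. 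Your first regime (the exact identity $\sum_{k=0}^{n-1}\binom{2n}{k}=\frac{1}{2}\left(4^n-\binom{2n}{n}\right)$, valid for $v(G)$ up to a few hundred) is correct as far as it goes, but it is subsumed by this argument and cannot be completed by cycle packing in the complementary regime.
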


We have seen that for  $3$-regular graphs the quantity $F(G)^{1/v(G)}$ is asymptotically maximized by large girth graphs. A similar theorem was proved for the number of spanning trees by McKay \cite{McKay1} for regular graphs of degree $d$ for arbitrary $d$. Here we show that the same holds for $F(G)$ for any $d$ assuming a well-known conjecture about a certain negative correlation.

\begin{Conj}[\cite{GW,Pem}]\label{correlation-conjecture} Let $G$ be a graph and let $\textbf{F}$ be a random forest chosen uniformly from all the forests of $G$. Let $e,f\in E(G)$, then
$$\P(e,f\in \textbf{F})\leq \P(e\in \textbf{F})\P(f\in \textbf{F}).$$
\end{Conj}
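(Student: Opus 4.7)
I would transform the probabilistic statement into a purely enumerative one and then induct. Using the deletion-contraction identity $F(G)=F(G\setminus e)+F(G/e)$, we have $\P(e\in\textbf{F})=F(G/e)/F(G)$, and analogously $\P(e,f\in\textbf{F})=F(G/\{e,f\})/F(G)$ (with the natural convention that if contracting $f$ after $e$ produces a loop then $F(G/\{e,f\})=0$ and the inequality is trivial). The conjecture thus rephrases as the symmetric inequality
$$F(G)\cdot F(G/\{e,f\})\leq F(G/e)\cdot F(G/f),$$
and the boundary cases (loops, bridges, and edges that become bridges after contracting the other) can be checked directly.

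Next, I would induct on $|E(G)|$. Picking an auxiliary edge $h\notin\{e,f\}$ and expanding each of the four quantities via deletion-contraction in $h$ turns both sides into sums of four products of $F$-values at minors of $G$. Two of the four pairs appear in the "diagonal" form and are immediately controlled by the inductive hypothesis applied to $G\setminus h$ and $G/h$. What remains is a residual inequality of the shape
$$a_1 b_2 + a_2 b_1 \leq c_1 d_2 + c_2 d_1,$$
where each letter stands for an $F$-value at a minor of $G$ and the diagonal bounds $a_i b_i\leq c_i d_i$ are given for $i=1,2$. The plan is to close the induction either by choosing $h$ carefully (e.g.\ as an edge closing a short cycle with $e$ and $f$, so that one of the cross terms collapses) or by recognizing the residual as a bona-fide three-edge negative-correlation statement that can itself be proved by further induction.

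The principal obstacle is precisely this residual: easy scalar counterexamples show that $a_i b_i\leq c_i d_i$ for $i=1,2$ does not in general imply the cross-term inequality, so the inductive step does not close by soft means. To push it through one would need a structural invariant of the graphic matroid, preserved under deletion and contraction, that is genuinely stronger than pairwise negative correlation. The cleanest candidate is real-stability of the multivariate forest polynomial $\sum_{F}\prod_{e\in F}z_e$, since it would imply the Strongly Rayleigh property and hence pairwise negative correlation; unfortunately this polynomial is known to fail real-stability for generic graphs, so one would need to substitute a weaker preserved invariant in the spirit of Feder--Mihail's balanced matroids, or alternatively construct a monotone coupling between the uniform forest measures on $G$, $G/e$, $G/f$, and $G/\{e,f\}$ based on a sampling algorithm (analogous to Wilson's algorithm, but adapted to the variable-size support). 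Realistically I expect this plan to deliver the conjecture only in restricted settings — series-parallel graphs, graphs of small size, or the case where $e$ and $f$ share an endpoint — and the general statement to remain, as it has been for two decades, open.
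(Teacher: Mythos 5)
You have not proved the statement, and you should not be expected to: in the paper this is stated as Conjecture~\ref{correlation-conjecture}, attributed to Grimmett--Winkler and Pemantle, and the authors give no proof of it. They only use it as a \emph{hypothesis} in Theorem~\ref{forest-cover} (via the equivalent reformulations of Lemma~\ref{equivalent inequalities}). So there is no ``paper's own proof'' to compare against; the honest conclusion of your last paragraph --- that the general statement remains open --- is exactly the status the paper assigns to it.

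That said, your analysis of why the obvious attack fails is accurate and worth recording. The translation $\P(e\in \textbf{F})=F(G/e)/F(G)$ and $\P(e,f\in \textbf{F})=F(G/\{e,f\})/F(G)$ is correct (with the degenerate cases handled as you say), so the conjecture is indeed equivalent to the log-submodularity-type inequality $F(G)\,F(G/\{e,f\})\leq F(G/e)\,F(G/f)$, and the deletion--contraction induction on an auxiliary edge $h$ does stall exactly at the cross-term inequality you isolate: the two diagonal bounds from the inductive hypothesis do not imply it, and no deletion--contraction-stable strengthening is known. Your remarks on real stability are also correct --- the multivariate forest generating polynomial is not real stable in general, so the uniform forest measure is not known to be strongly Rayleigh, which is precisely why the Feder--Mihail-type machinery that settles the analogous statement for spanning trees and for balanced matroids does not apply here. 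If you want to present something provable rather than the conjecture itself, the known partial results are the restricted settings you mention (e.g.\ series-parallel graphs, and the case where $e$ and $f$ share an endpoint, both due to Grimmett and Winkler), but the general case should be labeled a conjecture, as the paper does.
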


Assuming Conjecture~\ref{correlation-conjecture} we can prove a result on forests of $2$-covers which then implies our claim about the large girth graphs.

Recall that a $k$-cover (or $k$-lift) $H$ of a graph $G$ is defined as follows. The vertex set of  $H$ is $V(H)=V(G)\times \{0,1,\dots, k-1\}$, and if $(u,v)\in E(G)$,  then we choose a perfect matching between the vertex set $L_u=\{(u,i)\ |\ 0\leq i\leq k-1\}$ and $L_v=\{(v,i)\ |\ 0\leq i\leq k-1\}$. If $(u,v)\notin E(G)$, then there are no edges between $L_u$ and $L_v$. Figure~\ref{2-lift-picture} depicts a 2-lift.

\begin{figure}[h!] \label{2-lift-picture}
\begin{center}
\scalebox{.4}{\includegraphics{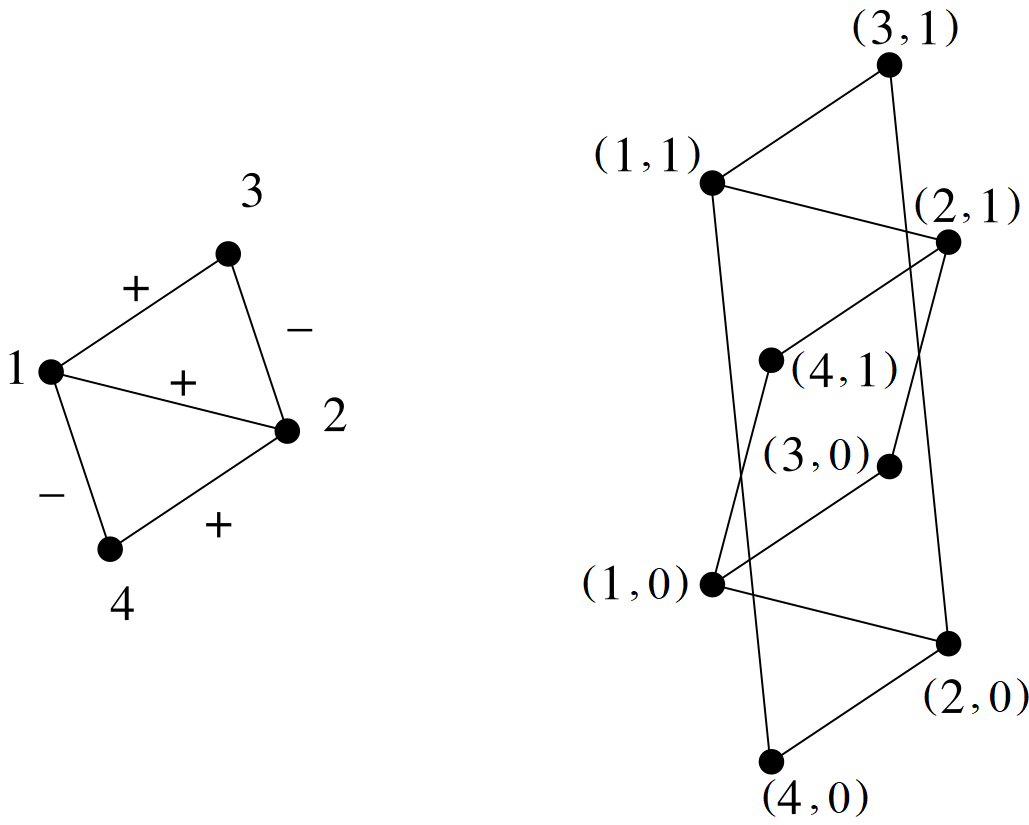}}
\caption{A $2$-lift.}
\end{center}
\end{figure}

When $k=2$ one can encode the $2$-lift $H$ by putting signs on the edges of the graph $G$: the $+$ sign means that we use the matching $((u,0),(v,0)),((u,1),(v,1))$ at the edge $(u,v)$,
the $-$ sign means that we use the matching  $((u,0),(v,1)),((u,1),(v,0))$ at the edge $(u,v)$. For instance, if we put $+$ signs to every edge, then we simply get the disjoint union $G\cup G$, and if we put $-$ signs everywhere, then the obtained $2$-cover $H$ is simply the tensor product $G\times K_2$. Observe that if $G$ is bipartite, then $G\cup G=G\times K_2$, but other $2$-covers might differ from $G\cup G$.

\begin{Th} \label{forest-cover}
Let $G$ be a graph, and let $H$ be a $2$-cover of $G$. If Conjecture~\ref{correlation-conjecture} is true, then we have
$$F(G\cup G)\leq F(H).$$
In other words, $F(G)^{1/v(G)}\leq F(H)^{1/v(H)}$.
\end{Th}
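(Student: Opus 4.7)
I would proceed by induction on $e(G)$. The base case $e(G)=0$ is immediate since both $H$ and $G\cup G$ consist of $2v(G)$ isolated vertices. For the inductive step with $e(G)\ge 1$, fix any non-loop edge $e\in E(G)$ and let $e_1,e_2$ denote its two lifts in $H$; these are vertex-disjoint edges of $H$, so iterated deletion-contraction for $F$ yields
\begin{equation*}
F(H) = F(H-e_1-e_2) + F(H/e_1-e_2) + F(H-e_1/e_2) + F(H/e_1/e_2).
\end{equation*}
The key structural remark is that $H-e_1-e_2$ is a $2$-cover of $G-e$, and $H/e_1/e_2$ is a $2$-cover of $G/e$: the two merged vertices $u_0\equiv v_{\sigma(e)}$ and $u_1\equiv v_{1-\sigma(e)}$ form the fiber over the contracted vertex, and the induced sign function on $E(G/e)$ agrees with $\sigma|_{E(G)\setminus\{e\}}$ up to a gauge transformation at the contracted vertex (which does not change the isomorphism class). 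By the inductive hypothesis,
\begin{equation*}
F(H-e_1-e_2)\ge F(G-e)^2,\qquad F(H/e_1/e_2)\ge F(G/e)^2.
\end{equation*}

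For the two mixed terms, I would invoke Conjecture~\ref{correlation-conjecture} on $H$ for the pair $e_1,e_2$. Writing $F_{\alpha\beta}$ for the number of forests of $H$ whose intersection with $\{e_1,e_2\}$ is prescribed by $(\alpha,\beta)\in\{\mathrm{in},\mathrm{out}\}^2$, the bound $\P(e_1,e_2\in\textbf{F})\le \P(e_1\in\textbf{F})\P(e_2\in\textbf{F})$ rearranges, after clearing denominators and cancelling the cross-terms, into the useful cross-product form
\begin{equation*}
F(H/e_1/e_2)\cdot F(H-e_1-e_2)\le F(H/e_1-e_2)\cdot F(H-e_1/e_2).
\end{equation*}
Combining this with AM-GM and the two inductive lower bounds gives
\begin{equation*}
F(H/e_1-e_2)+F(H-e_1/e_2)\ge 2\sqrt{F(H/e_1-e_2)\cdot F(H-e_1/e_2)}\ge 2\sqrt{F(H/e_1/e_2)\cdot F(H-e_1-e_2)}\ge 2F(G/e)F(G-e).
\end{equation*}

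Summing the four terms in the deletion-contraction expansion and using the classical identity $F(G)=F(G-e)+F(G/e)$ then yields
\begin{equation*}
F(H)\ge F(G-e)^2+2F(G/e)F(G-e)+F(G/e)^2=\bigl(F(G-e)+F(G/e)\bigr)^2=F(G)^2=F(G\cup G),
\end{equation*}
completing the induction. The main obstacle I expect is to verify cleanly the structural claim that $H/e_1/e_2$ is genuinely a $2$-cover of $G/e$ (up to gauge), so that the inductive hypothesis applies; once that is set up, the rest of the argument is routine, and remarkably the correlation conjecture is needed only in its mildest possible form of one pair of edges at a time.
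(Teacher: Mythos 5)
Your proof is correct and follows essentially the same route as the paper: induction on $e(G)$, the four-way deletion--contraction split over the fiber $\{e_1,e_2\}$, the inductive hypothesis applied to $H-e_1-e_2$ as a $2$-cover of $G-e$ and to $H/e_1/e_2$ as a $2$-cover of $G/e$, and the cross-product form of Conjecture~\ref{correlation-conjecture} (the third inequality of Lemma~\ref{equivalent inequalities}) to lower-bound the two mixed terms. The only cosmetic difference is that you finish the mixed terms with AM--GM, whereas the paper first observes $F_{e_1,\overline{e}_2}(H)=F_{\overline{e}_1,e_2}(H)$ via the deck transformation of the cover and then applies the same inequality; both give $F(H/e_1-e_2)+F(H-e_1/e_2)\geq 2F(G-e)F(G/e)$.
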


There is a nice property of covers that is related to the girth. For every graph $G$, there is a sequence of graphs $(G_n)_n$ such that $G_0=G$, $G_k$ is a $2$-cover of $G_{k-1}$, and  $g(G_k)\to \infty$. This is an observation due to Linial \cite{Lin}, his proof is also given in \cite{Csikv}. This observation and Theorem~\ref{forest-cover} (assuming Conjecture~\ref{correlation-conjecture}) together imply the following statement.
If one can prove that for any sequence of $d$-regular graphs $(G_n)_n$ with $g(G_n)\to \infty$, the limit $\lim_{n\to \infty}F(G_n)^{1/v(G_n)}$ always exists, and its value is (always) $s_d$, then $\sup_{G\in \mathcal{G}_d}F(G)^{1/v(G)}=s_d$.

Large girth $d$-regular graphs locally look like the infinite $d$-regular tree. So the above discussion suggests that it is natural to compare finite graphs with the infinite $d$-regular tree. At first sight, it might not be clear how to do it. Nevertheless, there is already such an argument in the literature. Kahale and Schulman \cite{KaSc} gave an upper bound on the number of acyclic orientations $a(G)$ in this spirit. Note that $a(G)=T_G(2,0)\leq T_G(2,1)=F(G)$. Their proof actually works for $F(G)$ too, and for $d\geq 6$ this upper bound is better than any of these three bounds: the trivial bound $2^{d/2}$ provided by $F(G)\leq 2^{e(G)}$, the bound $d+1$ provided by Proposition~\ref{product-forest}, and the bound $\exp\left(\frac{d}{2}H\left(\frac{2}{d}\right)\right)$ provided by Proposition~\ref{average-forest}.

\begin{Th}[Kahale and Schulman \cite{KaSc}] \label{KS-bound}
Let $G$ be a $d$-regular graph. Then
$$F(G)^{1/v(G)}\leq \frac{d+1}{\eta}\left(\frac{d-1}{d-\eta}\right)^{(d-2)/2},$$
where
$$\eta=\frac{(d+1)^2-(d+1)(d^2-2d+5)^{1/2}}{2(d-1)}.$$
\end{Th}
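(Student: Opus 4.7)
The plan is to adapt the inductive strategy of Kahale and Schulman \cite{KaSc}, observing that it passes essentially unchanged from $a(G)$ to $F(G)$. Both $F(G)=T_G(2,1)$ and $a(G)=T_G(2,0)$ satisfy the Tutte recurrence
$$T(G) \;=\; T(G-e) + T(G/e)$$
for every edge $e$ that is neither a bridge nor a loop, and both values simply double when a bridge is contracted. Any induction on $e(G)$ that relies only on these features of the Tutte recurrence is therefore insensitive to the difference between $y=0$ and $y=1$, so a bound proved for $a(G)$ by this route gives the same bound for $F(G)$.

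Concretely, I would introduce the per-vertex weight
$$w(k,\eta) \;:=\; \frac{k+1}{\eta}\left(\frac{k-1}{k-\eta}\right)^{(k-2)/2}$$
depending on a parameter $\eta\in (0,d)$, and try to prove the stronger inequality $F(G)\le \prod_{v\in V(G)} w(d_v,\eta)$ by induction on $e(G)$. Since the intermediate graphs produced by deletion-contraction are no longer $d$-regular (or even simple), one must work in this vertex-weighted generality. Deleting and contracting a non-loop non-bridge edge $uv$ reduces the inductive step to the local one-vertex inequality
$$w(d_u-1,\eta)\, w(d_v-1,\eta) \;+\; w(d_u+d_v-2,\eta) \;\le\; w(d_u,\eta)\, w(d_v,\eta),$$
which has to hold for every pair of degrees $(d_u, d_v)$ appearing in the recursion.

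The tight case in the $d$-regular setting is $w(d-1,\eta)^2 + w(2d-2,\eta) \le w(d,\eta)^2$, and the choice of $\eta$ that achieves equality corresponds to the fixed point of the cavity recursion on the infinite $d$-regular tree $T_d$. This stationarity is a quadratic in $\eta$ whose relevant root is exactly
$$\eta = \frac{(d+1)^2 - (d+1)\sqrt{d^2 - 2d + 5}}{2(d-1)},$$
giving the stated bound $F(G)^{1/v(G)}\le w(d,\eta)$ once the induction runs down to edgeless graphs where $F=1$. The main obstacle is verifying the one-step inequality uniformly over all degree pairs arising in the induction. After dividing by $w(d_u,\eta)\,w(d_v,\eta)$ and a suitable change of variables, I expect it to reduce to a one-parameter convexity statement whose extremum coincides with the quadratic above. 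Loops produced by contracting parallel edges contribute a factor of $1$ to $F$ and can be harmlessly deleted, so they do not interfere with the induction.
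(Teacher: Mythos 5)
Your proposal is not the paper's argument, and as written it contains a fatal gap rather than a deferred technicality. The paper follows Kahale--Schulman's \emph{spectral} route: add an apex vertex joined to every vertex of $G$ by an edge of weight $\alpha$; the matrix-tree theorem gives $\sum_F w_\alpha(F)=\det\bigl(L(G)+\alpha I\bigr)=\prod_i(d+\alpha-\mu_i)$ where $w_\alpha(F)=\alpha^{k(F)}\prod_j|F_j|$; since $w_1(F)\ge 1$ this yields $F(G)\le\prod_i(d+1-\mu_i)$; then $\frac1n\sum_i\ln(d+1-\mu_i)\le\int\ln(d+1-x)\,d\mu_{KM}(x)$ via the closed-walk comparison $W_k(G)\ge n\,W_k(\mathbb{T}_d,o)$ and the negativity of the coefficients of $\ln\bigl(1-\tfrac{x}{d+1}\bigr)$; finally McKay's closed form for $\int\ln(1-\gamma x)\,d\mu_{KM}(x)$ at $\gamma=\tfrac1{d+1}$ produces the stated $\eta$, which is a root of $(d-1)\gamma^2x^2-x+1=0$ (a quantity attached to the Green's function of the $d$-regular tree), not of a stationarity condition for a local deletion--contraction inequality. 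Indeed, your claimed ``tight case'' is false: for $d=3$ one has $\eta=4-2\sqrt2$ and $w(2,\eta)^2+w(4,\eta)\approx 11.08$ versus $w(3,\eta)^2\approx 12.75$, nowhere near equality.

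The real problem is the local inequality $w(a-1)w(b-1)+w(a+b-2)\le w(a)w(b)$, which you acknowledge but do not verify, and which must hold for \emph{all} degree pairs because contraction produces vertices of unbounded degree even starting from a $d$-regular graph. It fails for large degrees. As $k\to\infty$, $\bigl(\tfrac{k-1}{k-\eta}\bigr)^{(k-2)/2}\to e^{(\eta-1)/2}$, so $w(k,\eta)\sim c\,(k+1)$ with $c=e^{(\eta-1)/2}/\eta$. One checks $1<\eta<2$ for all $d\ge 3$ (e.g.\ $\eta>1$ is equivalent to $4(d-1)^2>0$), and on $(1,2)$ one has $\ln\eta>(\eta-1)/2$, i.e.\ $c<1$. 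Hence $w(a)w(b)-w(a-1)w(b-1)-w(a+b-2)\approx c\bigl((c-1)(a+b)+c+1\bigr)<0$ once $a+b$ is large: the left side of your local inequality grows with slope $c$ in $a+b$ while the right side gains only slope $c^2<c$. Concretely, for $d=3$ the inequality already fails at $a=b=20$: $w(19)^2+w(38)\approx 379.2>378.3\approx w(20)^2$. So a Thomassen-style multiplicative induction cannot close with this weight function for any $d$ (the failure threshold merely moves to larger degrees as $d$ grows), and you should instead run the matrix-tree/Kesten--McKay argument above.
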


\noindent Theorem~\ref{KS-bound} gives the bound
$$F(G)^{1/v(G)}\leq d+\frac{1}{2}+\frac{1}{8d}+\frac{13}{48d^2}+O\left(\frac{1}{d^3}\right).$$

In this paper, we will review the proof of Theorem~\ref{KS-bound} and show how to improve on it for certain $d$.  The proof is actually a combination of the proof of Theorem~\ref{KS-bound} and  Proposition~\ref{average-forest}.
In particular, we will prove the following statement.

\begin{Th} \label{regular upper bounds}
Let $G$ be a $d$-regular graph, where $d\in \{5,6,7,8,9\}$. Then $F(G)^{1/v(G)}\leq C_d$, where $C_d$ is a constant strictly better than the one given in Theorem~\ref{KS-bound} and is given in Table~\ref{table forest}.
\end{Th}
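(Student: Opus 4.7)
The proof combines the Kahale-Schulman argument with the idea behind Proposition~\ref{average-forest}, namely the constraint that any forest of $G$ has at most $v(G)-1$ edges. The plan is to replace the unweighted forest count by a forest generating polynomial and then incorporate the size constraint via an elementary weighting trick.

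For a parameter $t \in (0,1]$, let $F_k(G)$ denote the number of forests of $G$ with exactly $k$ edges, and set
$$F_G(t) = \sum_{k=0}^{v(G)-1} F_k(G)\, t^k.$$
Since $t^k \geq t^{v(G)-1}$ for every $k \leq v(G)-1$ when $t \in (0,1]$, one obtains the elementary inequality
$$F(G) = F_G(1) \leq t^{-(v(G)-1)}\, F_G(t).$$
This is where the size constraint enters, mirroring the source of saving in Proposition~\ref{average-forest}.

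Next I would revisit the Kahale-Schulman argument from Theorem~\ref{KS-bound} and carry it through with the weight $t$ placed on each edge of the forest. In their proof, $F(G)^{1/v(G)}$ is bounded by a local computation on the infinite $d$-regular tree in which each edge contributes a factor $1$; substituting $t$ for $1$ in that local generating function gives an analogous bound
$$F_G(t) \leq \Phi(t,d)^{v(G)},$$
where $\Phi(t,d)$ is the natural weighted analogue and $\Phi(1,d)$ equals the Kahale-Schulman right-hand side. Combining the two estimates yields
$$F(G)^{1/v(G)} \leq t^{(1-v(G))/v(G)}\, \Phi(t,d),$$
which in the limit $v(G)\to\infty$ is at most $\Phi(t,d)/t$ for every $t\in (0,1]$.

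Finally I would minimize $\Phi(t,d)/t$ numerically over $t \in (0,1]$ for each $d \in \{5,6,7,8,9\}$. At $t=1$ this recovers Theorem~\ref{KS-bound} exactly, so the new bound is never worse than the Kahale-Schulman bound; the improvement comes from those $d$ for which the minimum is attained at some $t^*<1$, where tilting down the weight of long forests genuinely pays off. The constants $C_d$ in Table~\ref{table forest} would then be read off this minimization. I expect the main technical obstacle to be executing the Kahale-Schulman local tree estimate in its weighted form: their closed-form optimization in $\eta$ no longer goes through once the edge weight $t$ is inserted, so the joint minimization over $(t,\eta)$ must be carried out numerically for each $d$, and one must check that the optimal $t^*$ is indeed strictly less than $1$ for $d \in \{5,6,7,8,9\}$ to secure a strict improvement.
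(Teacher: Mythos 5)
Your approach has a genuine gap: the multiplicative tilt cannot produce any improvement over Theorem~\ref{KS-bound}, because the two steps cancel exactly. In the Kahale--Schulman argument the quantity that is actually controlled is $\det(L+\alpha I)=\sum_F \alpha^{k(F)}\prod_i|F_i|$, where $L$ is the Laplacian and $k(F)$ is the number of components of the forest $F$; the natural weighted analogue with edge weight $t$ is $F_G(t)\leq \det(tL+I)$ (the apex weight must stay at least $1$ so that every forest is counted with weight at least $t^{|F|}$). But $\det(tL+I)=t^{v(G)}\det\left(L+t^{-1}I\right)$, so
$$t^{-(v(G)-1)}F_G(t)\leq t\prod_{i}\left(\lambda_i+\frac{1}{t}\right),$$
and after the Kesten--McKay estimate your bound becomes $\exp\bigl(\int\ln\bigl(d+\frac{1}{t}-x\bigr)\,d\mu_{KM}(x)\bigr)$, which is increasing in $1/t$ and hence minimized at $t^*=1$ for every $d$. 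Tilting the edge weights is only a reparametrization of the apex weight $\alpha=1/t\geq 1$, whereas the improvement requires pushing $\alpha$ \emph{below} $1$, which your inequality $F(G)\leq t^{-(v(G)-1)}F_G(t)$ cannot achieve. (Also, the $\eta$ in Theorem~\ref{KS-bound} is not a free parameter --- it is the closed form of the integral $J_d$ --- so there is no ``joint minimization over $(t,\eta)$''.)

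The missing idea is an additive split by the number of components, not a multiplicative tilt by the number of edges. The paper bounds the forests with at most $cn$ components by $\alpha^{-cn}\det(L+\alpha I)$ with $\alpha<1$, which is legitimate for exactly these forests because $\alpha^{k(F)}\prod_i|F_i|\geq\alpha^{cn}$ when $k(F)\leq cn$; it bounds the forests with more than $cn$ components (hence fewer than $(1-c)n$ edges) by the binomial sum $\sum_{k\leq(1-c)n}\binom{dn/2}{k}\leq\exp\bigl(n\frac{d}{2}H\bigl(\frac{2(1-c)}{d}\bigr)\bigr)$, which is precisely the mechanism of Proposition~\ref{average-forest}. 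The two terms are then balanced by a numerical choice of $(\alpha,c)$ for each $d$, and the resulting constant factor is removed by the $2$-cover trick from the proof of Theorem~\ref{4-regular-forest}. Your write-up invokes the entropy saving of Proposition~\ref{average-forest} as motivation, but it never actually enters your estimates; without the component split there is nothing to gain.
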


\begin{center}
\begin{table}[h!] \label{table forest}
\caption{Bounds on the number of forests for small $d$}

\begin{tabular}{|c|c|c|c|c|}  \hline
$d$ & new bound $C_d$ & Thm.~\ref{KS-bound} & Prop.~\ref{product-forest} & Prop.~\ref{average-forest}\\ \hline
$5$   & $5.1965$           &   $5.5362$ & 6 & 5.3792\\ \hline
$6$   & $6.3367$           &   $6.5287$ & 7 & 6.7500         \\ \hline
$7$   & $7.4290$           &   $7.5236$ & 8 & 8.1169    \\ \hline
$8$   & $8.4843$           &   $8.5201$ & 9  & 9.4815      \\ \hline
$9$   & $9.5116$           &   $9.5174$ & 10  &  10.8447     \\ \hline
\end{tabular}

\end{table}
\end{center}

\begin{Rem}
One might wish to compare these results with existing bounds on finite and infinite sections of Archimedean lattices, cf. \cite{CMNN,ChSh1,ChSh2,MeWe,Mani}. For these specific graphs one may give more accurate bounds.
\end{Rem}

\subsection{Number of connected spanning subgraphs}

In this section, we collect the results on the number of connected spanning subgraphs. Again the trivial upper bound is $C(G)\leq 2^{e(G)}$ which gives $C(G)^{1/v(G)}\leq 2^{d/2}$ for a graph with average degree $d$.
This time this inequality can never be tight, not even for $3$-regular graphs.

\begin{Th} \label{regular-connected} Let $\mathcal{G}_d$ be the set of $d$-regular graphs. Then
$$\sup_{G\in \mathcal{G}_d}C(G)^{1/v(G)}<2^{d/2}\left(1-\frac{1}{2^d}\right)\exp\left(\frac{d}{2^{d}(2^d-1)}\right).$$
\end{Th}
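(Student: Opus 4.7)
The plan is to bound $C(G)$ from above by the number $N(G)$ of spanning subgraphs of $G$ with no isolated vertex, and then to estimate $N(G)$ via a correlation inequality. Since every connected spanning subgraph has no isolated vertex, $C(G)\leq N(G)$. Let $\mathbf{S}$ be a uniformly random subset of $E(G)$ and let $A_v$ be the event that $v$ has an incident edge in $\mathbf{S}$, so that $\P(A_v)=1-2^{-d}$ and $C(G)/2^{e(G)}\leq \P(\bigcap_v A_v)$. Inclusion--exclusion on the subset $S$ of would-be isolated vertices gives
\[
\P\!\left(\bigcap_v A_v\right)\;=\;\sum_{S\subseteq V(G)}(-1)^{|S|}\,2^{-(d|S|-e(S))},
\]
where $e(S)$ is the number of edges of $G$ with both endpoints in $S$.

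The central step is to establish
\[
\P\!\left(\bigcap_v A_v\right)\;\leq\;(1-2^{-d})^{v(G)}\exp\!\left(\frac{d\,v(G)}{2^d(2^d-1)}\right);
\]
multiplying by $2^{e(G)}=2^{dv(G)/2}$ and taking $v(G)$-th roots then yields the desired upper bound on $C(G)^{1/v(G)}$. The FKG inequality (the events $A_v$ are increasing) already gives $\P(\bigcap_v A_v)\geq (1-2^{-d})^{v(G)}$, so the estimate above quantifies the gap caused by the positive correlation between isolation events. I would prove it by a Suen- or Janson-type inequality applied to the isolation events $B_v=A_v^c$: these are independent for non-adjacent pairs, and for adjacent $uv\in E(G)$ they satisfy $\P(B_u\cap B_v)=2^{-(2d-1)}=2\,\P(B_u)\P(B_v)$. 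Summing the resulting dependency correction over the $e(G)=dv(G)/2$ edges produces an exponent of order $\frac{2e(G)}{2^d(2^d-1)}=\frac{d\,v(G)}{2^d(2^d-1)}$, matching the claimed constant.

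The strict inequality in the statement follows from the fact that $C(G)<N(G)$ strictly for every $d$-regular graph admitting a disconnected edge cover (which is the case for all $d\geq 2$ once $v(G)$ is sufficiently large), providing a uniform gap in the supremum. The main obstacle I anticipate is matching the precise constant $\frac{d}{2^d(2^d-1)}$ in the exponent: a textbook application of Janson's inequality produces a bound of the form $\exp(-\mu+\Delta/2)$ with $\mu=v(G)\cdot 2^{-d}$, which is of the correct order but in a slightly different functional form (with $-\mu$ in place of $v(G)\log(1-2^{-d})$). One therefore needs either a careful invocation of Suen's inequality with the correct parameters $\Delta$ and $\delta$, or a direct polymer/cluster expansion of the alternating sum above, to recover the target exponent term by term.
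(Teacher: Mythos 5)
Your proposal follows essentially the same route as the paper: bound $C(G)/2^{e(G)}$ by the probability that a uniform random edge subset leaves no vertex isolated, and control the dependencies among the isolation events $B_v$ via Janson's inequality, with $\Delta=\sum_{u\sim v}\P(B_u\cap B_v)=nd/2^{2d-1}$. The constant you were worried about comes out exactly from the standard form $\P\left(\bigcap_v \overline{B_v}\right)\leq M\exp\left(\frac{1}{1-\varepsilon}\cdot\frac{\Delta}{2}\right)$ with $M=(1-2^{-d})^{n}$ and $\varepsilon=2^{-d}$, which is precisely the version the paper invokes, so no recourse to Suen's inequality or a cluster expansion is needed.
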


We will again prove another upper bound for graphs with small average degree.

\begin{Th} \label{average-connected} Let $G$ be a graph with average degree $\overline{d}$. If $2< \overline{d}\leq 4$, then
$$C(G)\leq \frac{2}{\overline{d}-2}\exp\left(v(G)\cdot \frac{\overline{d}}{2}H\left(2/{\overline{d}}\right)\right).$$
\end{Th}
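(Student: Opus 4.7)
The plan is to combine the obvious lower bound on the edge count of a connected spanning subgraph with the entropy estimate used to prove Proposition~\ref{average-forest}. Set $n = v(G)$ and $m = e(G) = \overline{d}n/2$. Since every connected spanning subgraph must contain at least $n-1$ edges,
$$C(G) \le \sum_{k=n-1}^{m}\binom{m}{k} = \sum_{k=0}^{L}\binom{m}{k}, \qquad L := m-n+1,$$
where the second equality is the substitution $k \mapsto m-k$. In the generic range $\overline{d}\le 4 - 4/n$ one has $L\le m/2$, so the entropy inequality cited after Proposition~\ref{average-forest} applies and yields
$$\sum_{k=0}^{L}\binom{m}{k}\le \exp(mH(L/m)) = \exp(mH((n-1)/m)),$$
using $H(x)=H(1-x)$.

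The heart of the proof is then to trade $H((n-1)/m)$ for $H(n/m)=H(2/\overline{d})$. Since $H$ is concave with derivative $H'(y)=\ln((1-y)/y)$, the tangent-line inequality at $y=n/m$ gives
$$H((n-1)/m) \le H(n/m) - H'(n/m)/m.$$
Using $-H'(n/m)=\ln(n/(m-n))=\ln(2/(\overline{d}-2))$, multiplying by $m$ and exponentiating yield
$$\exp(mH((n-1)/m)) \le \frac{2}{\overline{d}-2}\exp(mH(n/m)) = \frac{2}{\overline{d}-2}\exp\left(v(G)\cdot\frac{\overline{d}}{2}H(2/\overline{d})\right),$$
which, chained with the preceding bound on the binomial sum, gives the claim.

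The only remaining point is the narrow strip $\overline{d}\in(4-4/n,4]$, which forces $m\in\{2n-1,2n\}$ and hence $L>m/2$, so the entropy inequality does not apply out of the box. I expect to handle it by the trivial estimate $C(G)\le 2^m$: a short direct computation, expanding $\ln 2 - H(2/\overline{d})$ around $\overline{d}=4$, shows that $2^m$ already lies below $\tfrac{2}{\overline{d}-2}\exp(mH(n/m))$ on this strip, with equality at $\overline{d}=4$ (where both sides reduce to $4^n$). The concavity trade-off is the main idea; the boundary case is routine bookkeeping.
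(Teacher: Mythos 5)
Your proof is correct, but it follows a genuinely different route from the paper. The paper's argument is via the random cluster partition function: it shows that $Z_{\mathrm{RC}}(G,q,w)=\sum_{A\subseteq E}q^{k(A)}w^{|A|}\leq q^{v(G)}(1+w/q)^{e(G)}$ for $0<q\leq 1$ term by term (using $|A|+k(A)\geq v(G)$), then takes $q=\tfrac{\overline{d}-2}{2}$, $w=1$ and keeps only the connected terms, so that $qC(G)\leq q^{v(G)}(1+1/q)^{e(G)}$; the right-hand side is exactly $\tfrac{\overline{d}-2}{2}\cdot\tfrac{2}{\overline{d}-2}\exp\bigl(v(G)\tfrac{\overline{d}}{2}H(2/\overline{d})\bigr)$. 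That argument is uniform in $\overline{d}\in(2,4]$ with no case distinction, and the constant $\tfrac{2}{\overline{d}-2}$ appears transparently as $1/q$. Your route instead dualizes the proof of Proposition~\ref{average-forest}: a connected spanning subgraph has at least $n-1$ edges, so its complement has at most $m-n+1$, and the entropy bound applies to the complementary sum. This is more elementary and makes the forest/connected-subgraph duality explicit, at the cost of two extra steps that the paper avoids: the tangent-line (concavity) correction needed to convert $H((n-1)/m)$ into $H(n/m)$ while producing exactly the factor $\tfrac{2}{\overline{d}-2}$ (your computation of $-H'(n/m)=\ln\tfrac{2}{\overline{d}-2}$ is right), and the separate treatment of the strip $m\in\{2n-1,2n\}$ where $L>m/2$. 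I checked that strip: for $m=2n$ both sides equal $4^n$, and for $m=2n-1$ the required inequality $2^{2n-1}\leq\tfrac{n}{n-1}\exp\bigl((2n-1)H(\tfrac{n}{2n-1})\bigr)$ does hold for all $n\geq 2$, so your "routine bookkeeping" is indeed routine, though you should write it out. Both approaches yield the identical bound.
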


\subsection{This paper is organized as follows.} Each section of the paper contains a proof of a theorem or proposition that is stated in the introduction. The proofs are in the same order as the results appear in the introduction.

\section{Proof of Proposition~\ref{product-forest}}

In this section, we give two proofs of  Proposition~\ref{product-forest}.

In the first proof we will use the recursion
$$F(G)=F(G-e)+F(G/e),$$
where $G-e$ is the graph obtained from $G$ by deleting the edge $e$, and $G/e$ is the graph obtained from $G$ by contracting the edge $e$. This latter operation means that we replace the end vertices $u,v$ of $e$ by a new vertex $w$, and for a vertex $s\neq u,v$ we add as many edges between $s$ and $w$ as it goes between $s$ and the set $\{u,v\}$ in the graph $G$, and if there were $k$ edges going between $u$ and $v$ in $G$, then we add $k-1$ loops to the vertex $w$ in $G/e$. The above recursion simply counts the number of forests based on the property that a forest contains the edge $e$ or not. Note that the contraction may produce multiple edges so we necessarily  work in the class of graphs with multiple edges. A forest cannot contain a loop so we can even delete them from the contraction.

\begin{proof}[Proof of Proposition~\ref{product-forest}]
This can easily be proved by induction using the identity \\ $F(G)=F(G-e)+F(G/e)$. If $e=(u,v)$, then

\begin{align*}
F(G)=F(G-e)+F(G/e)&\leq (d_u-1+1)(d_v-1+1)\prod_{w\neq u,v}(d_w+1)\\
&\ +(d_u+d_v-2+1)\prod_{w\neq u,v}(d_w+1)\\
&\leq (d_u+1)(d_v+1)\prod_{w\neq u,v}(d_w+1).
\end{align*}
\end{proof}

\begin{proof}[Second proof]
For a graph $G$ and an orientation $\mathcal{O}$ of the edges, the score vector of $\mathcal{O}$ is simply the out-degree sequence of this orientation. It is known that the number of different score vectors is exactly the number of forests of $G$. This is an unpublished result of R. Stanley, a bijective proof can be found in \cite{KlWi}. Since the out-degree of a vertex $u$ is between $0$ and $d_u$, the number of different score vectors is at most $\prod_{u\in V(G)}(d_u+1)$. 
\end{proof}

\section{Proof of Theorem~\ref{3-regular-forest}}

In this section, we prove Theorem~\ref{3-regular-forest}. Since $a(G)\leq F(G)\leq 2^{e(G)}$ it is enough to prove that $\lim_{n\to \infty}a(G_n)^{1/v(G_n)}=2\sqrt{2}$. In fact, we will prove a slightly stronger theorem. For this, we need the concepts of weakly induced forest and  broken cycle. Figure 2 may help to understand these concepts.

\begin{Def}
Let us label the edges of the graph $G$ with numbers from $1$ to $|E(G)|$. A broken cycle is an edge set that we obtain from a cycle by deleting the edge with the largest label. Let $c_k(G)$ be the number of edge sets with exactly $k$ edges that do not contain any broken cycle. (Note that these edge sets must be forests, since they cannot contain cycles.)

\end{Def}

\begin{Def}
A set $S\subseteq E(G)$ is called a weakly induced forest if it contains no cycle, and the connected components determined by $S$ induces exactly the edges of $S$, all other edges are going between the connected components. Note that the vertex set of a weakly induced forest is the vertex set of the original graph $G$, that is, $V(G)$. Let $F_{wi}(G)$ be the number of weakly induced forests.
\end{Def}

\begin{figure}[h!] \label{broken cycles picture}
\begin{center}
\scalebox{.65}{\includegraphics{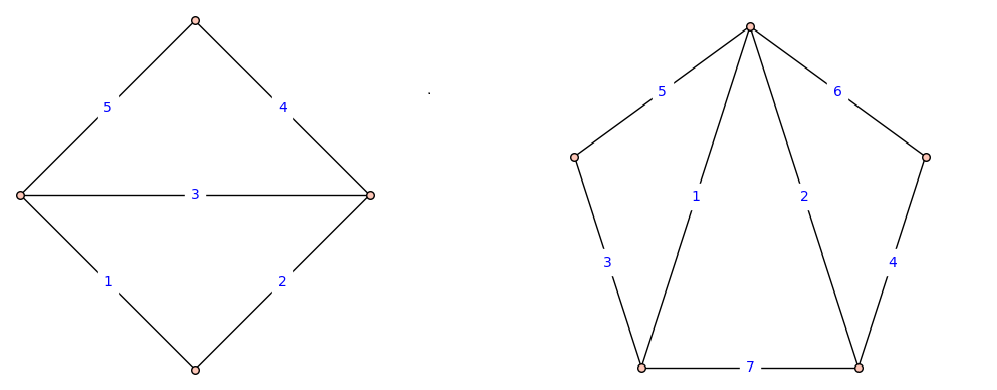}}
\end{center}
\caption{In the left graph, the broken cycles are edge sets $\{1,2\}$, $\{3,4\}$, $\{1,2,4\}$; the weakly induced forests are $\varnothing$, $\{1\}$, $\{2\}$, $\{3\}$, $\{4\}$, $\{5\}$, $\{1,4\}$, $\{1,5\}$, $\{2,4\}$, $\{2,5\}$. In the right graph, the broken cycles are $\{1,3\}$, $\{1,2\}$, $\{2,4\}$, $\{2,3,5\}$, $\{1,4,6\}$, $\{3,4,5,6\}$; some of the induced forests are $\{1,4\}$, $\{3,7\}$, $\{4,5,7\}$, $\{5,6,7\}$.}
\end{figure}

\begin{Lemma} \label{two inequalities} For any graph $G$, we have
$$F_{wi}(G)\leq a(G)\leq F(G).$$
\end{Lemma}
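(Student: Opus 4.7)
The plan is to interpret $a(G)$ combinatorially as a count of broken-cycle-free edge subsets, and then sandwich it between $F_{wi}(G)$ and $F(G)$ by straightforward inclusions of edge-subset families. Concretely, I will fix any labeling of $E(G)$ by $1,\dots,|E(G)|$ and invoke Whitney's broken cycle theorem, which expands the chromatic polynomial as
$$\chi_G(x)=\sum_{k=0}^{v(G)-1}(-1)^k c_k(G)\,x^{v(G)-k}.$$
Combining this with Stanley's identity $a(G)=(-1)^{v(G)}\chi_G(-1)$ collapses the alternating sum and gives $a(G)=\sum_{k\ge 0}c_k(G)$, so $a(G)$ counts precisely the edge subsets of $G$ that avoid every broken cycle (for the fixed labeling). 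Once this reformulation is in hand, both desired inequalities become inclusions of subfamilies.

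For $a(G)\le F(G)$ I will argue that every broken-cycle-free subset is a forest: if such an $S$ contained a cycle $C$, it would in particular contain the broken cycle $C\setminus\{e_{\max}\}$, contradicting the defining property of $S$. Hence the broken-cycle-free subsets form a subfamily of the forests of $G$.

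For $F_{wi}(G)\le a(G)$ I will show that every weakly induced forest is broken-cycle-free. Suppose for contradiction that a weakly induced forest $S$ contains a broken cycle $C\setminus\{e_{\max}\}$, where $e_{\max}=uv$ is the heaviest edge of the cycle $C$. The path $C\setminus\{e_{\max}\}\subseteq S$ places $u$ and $v$ in a common component of the spanning subgraph $(V(G),S)$. On the other hand $e_{\max}\notin S$, for otherwise $C\subseteq S$ and $S$ would fail to be a forest. Thus $e_{\max}$ is an edge of $G$ joining two vertices of a single component of $S$ yet not lying in $S$, directly contradicting the weakly induced property.

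The argument is then essentially routine; the only conceptual step is the initial invocation of Whitney's broken cycle theorem together with Stanley's formula to translate $a(G)$ into a count of edge subsets. Once that reformulation is available, both inclusions follow immediately from the definitions, and no finer bijection between acyclic orientations and forests nor any manipulation of the Tutte polynomial is required.
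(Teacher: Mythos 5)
Your proposal is correct and follows essentially the same route as the paper: both reduce to the fact (via Stanley's formula $a(G)=|\ch(G,-1)|$ and Whitney's broken cycle expansion) that $a(G)$ counts the broken-cycle-free edge subsets, then obtain $a(G)\leq F(G)$ since such subsets are forests, and $F_{wi}(G)\leq a(G)$ since a weakly induced forest cannot contain a cycle minus an edge. Your spelled-out contradiction argument for the second inclusion matches the paper's reasoning exactly, just in more detail.
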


\begin{proof}
The proof is based on the fact that $a(G)$ is the number of edge subsets of $E(G)$ without a broken cycle. This follows from the following well-known facts. Let $\ch(G,q)$ be the chromatic polynomial of the graph $G$, this polynomial counts the number of proper colorings of the graph $G$ when we color the vertices of the graph with $q$ colors (it is allowed that some colors are not used), see for instance \cite{Read}. It is also known \cite{Stan} that $|\ch(G,-1)|=a(G)$. Furthermore, $\ch(G,q)=\sum_{k=0}^{n-1}(-1)^kc_k(G)q^{n-k}$, 
see \cite{Whit}. So $a(G)=\sum_{k=0}^{n-1}c_k(G)$ is the number of edge subsets of $E(G)$ without a broken cycle. From this it is immediately clear that $a(G)\leq F(G)$. It is also clear that a weakly induced forest does not contain any broken cycle no matter what the labeling is since it does not contain a path that can be obtained by deleting an edge from a cycle. Hence $F_{wi}(G)\leq a(G)\leq F(G)$.
\end{proof}

\begin{Rem} This remark outlines another proof of Lemma~\ref{two inequalities} via shattering sets. Its sole purpose is to share this unusual proof, the reader should feel free to skip this remark.

The following proof of Lemma~\ref{two inequalities} is based on an observation of Kozma and Moran \cite{KoMo}. For a set $X$ and a set system $\mathcal{S}\subseteq 2^{X}$ let
$$\mathrm{str}(\mathcal{S})=\{ Y\subseteq X\ |\ \forall A\subseteq Y\  \ \exists S\in \mathcal{S}\ \ A=Y\cap S\}$$
and
$$\mathrm{sstr}(\mathcal{S})=\{ Z\subseteq X\ |\ \exists B\subseteq X\setminus Z\ \  \forall A\subseteq Z\ \ \ A\cup B\in \mathcal{S}\}.$$
The elements of the set system $\mathrm{str}(\mathcal{S})$ are the shattered sets of $\mathcal{S}$, and elements of the set system $\mathrm{sstr}(\mathcal{S})$ are the strongly shattered sets of $\mathcal{S}$. It is known that
$$|\mathrm{sstr}(\mathcal{S})|\leq |\mathcal{S}|\leq |\mathrm{str}(\mathcal{S})|.$$
Now let $X=E(G)$ and let us fix an orientation of the edges. Then every orientation corresponds to a subset of $E(G)$, namely to the edge set where the orientation differs from the fixed orientation. Now following Kozma and Moran let $\mathcal{S}$ be the family of acyclic orientations. Then $Y\subset E(G)$ is shattered if no matter how we orient the edges of $Y$ we can extend it to an acyclic orientation of $G$. It is easy to see that these are exactly the forests of $G$: first of all, it cannot contain a cycle, because then by orienting the cycle we cannot extend it to an acyclic orientation. Secondly, if we orient a forest somehow, then we can orient the rest of the edges according to some topological order that is compatible with the orientation of the forest. A set $Z\subset E(G)$ is strongly shattered if we can orient the rest of the edges in a way that no matter how the edges of $Z$ are oriented it will be an acyclic orientation. Again it is easy to see that these edge sets are exactly the weakly induced forests of $G$: such an edge set cannot contain a cycle or a cycle minus an edge, because otherwise no matter how we orient the rest of the edges we would be able to achieve a cycle by orienting the elements of $Z$. On the other hand, if $Z$ determines a weakly induced forest, then by numbering the connected components of $Z$, and orienting the rest of the edges towards the largest numbers we get an orientation of $E(G)\setminus Z$ that satisfies that no matter how we orient the edges of $Z$ it will yield an acyclic orientation.
\end{Rem}

As a preparation for the proof of Theorem~\ref{3-regular-forest} we add some remarks. The proof uses probabilistic ideas, in particular, the so-called FKG-inequality \cite{FKG,AlSp}. For each subset $S\subseteq E(G)$ we can associate the indicator vector $\omega_S\in \{0,1\}^{E(G)}$:
$$\omega_S(e)=\left\{ \begin{array}{cl} 1 & \mbox{if}\ e\in S, \\ 0 & \mbox{if}\ e\notin S. \end{array} \right.$$
There is a natural partial ordering on the vectors of $\{0,1\}^{E(G)}$: $\omega\leq \omega'$ if for all $e$ we have $\omega(e)\leq \omega'(e)$. On the level of sets this simply means that $\omega_S\leq \omega_{S'}$ if and only if $S\subseteq S'$. A function $f:\{0,1\}^{E(G)}\to \mathbb{R}$ is monotone increasing if $f(\omega)\leq f(\omega')$ whenever $\omega\leq \omega'$. A function $g:\{0,1\}^{E(G)}\to \mathbb{R}$ is monotone decreasing if $g(\omega)\geq g(\omega')$ whenever $\omega\leq \omega'$. 

Next let us consider the uniform measure $\mu$ on $\{0,1\}^{E(G)}$, that is, $\mu(\omega_S)=\frac{1}{2^{e(G)}}$ for all $S\subseteq E(G)$. Then $\mu$ trivially satisfies (with equality) the so-called log-supermodularity inequality:
$$\mu(\omega)\mu(\omega')\leq \mu(\omega \vee \omega')\mu(\omega \wedge \omega'),$$
where $\omega \vee \omega'$ is the vector such that $(\omega \vee \omega')(e)=\max(\omega(e),\omega'(e))$, and 
$\omega \wedge \omega'$ is the vector such that $(\omega \wedge \omega')(e)=\min(\omega(e),\omega'(e))$. The so-called FKG-inequality \cite{FKG} asserts that if the random variables $X,Y:\{0,1\}^{E(G)}\to \mathbb{R}_{\geq 0}$  are both monotone increasing or both monotone decreasing and $\mu$ is log-supermodular, then
$$\mathbb{E}(XY)\geq \mathbb{E}(X)\mathbb{E}(Y).$$
Since the product of non-negative monotone decreasing random variables is also monotone decreasing, we get that
$$\mathbb{E}\left[\prod_{i=1}^kX_i\right]\geq \prod_{i=1}^k\mathbb{E}[X_i]$$
if every $X_i$ is non-negative monotone decreasing random variable. With this preparation, we can prove Theorem~\ref{3-regular-forest}.

\begin{proof}[Proof of Theorem~\ref{3-regular-forest}]
We will actually prove that $\lim_{n\to \infty}F_{wi}(G_n)^{1/v(G_n)}=2\sqrt{2}$. By the above lemma it implies that $\lim_{n\to \infty}a(G_n)^{1/v(G_n)}=2\sqrt{2},$
and \\
$\lim_{n\to \infty}F(G_n)^{1/v(G_n)}=2\sqrt{2}$.

Let us consider a subset $A\subseteq E$ chosen uniformly at random from all possible $2^{e(G)}$ subsets. Then
$$\mathbb{P}(A\ \mbox{is a weakly induced forest})=2^{-3n/2}F_{wi}(G).$$
Let $C_1,C_2,\dots ,C_k$ be the connected components of $A$. Note that $C_j$ might be a single vertex, or it might contain a cycle.
For a fixed vertex $v$ let $X_v:\{0,1\}^{E(G)}\to \mathbb{R}$ be the indicator variable that the vertex $v$ is in a weakly induced tree, that is, it is a tree whose connected component contains only the edges of the tree. 
In other words, if $v\in C_j$ for some $j$, then $X_v(\omega_A)=1$ if $C_j$ is a tree that only contains the edges of $G[V(C_j)]$, otherwise it is $0$. In particular, it is $0$ if it contains a cycle, or $G[V(C_j)]$ contains an edge that is not in $A$. 

The set $A$ is weakly induced forest if and only if $X_v(\omega_A)=1$ for all $v\in V(G)$. Hence
$$\mathbb{P}(A\ \mbox{is a weakly induced forest})=\mathbb{E}\left[\prod_{v\in V}X_v\right].$$
Observe that $X_v$ are all monotone decreasing functions (a subset of a weakly induced tree is also weakly induced), and so by the FKG-inequality we get that
$$\mathbb{E}\left[\prod_{v\in V}X_v\right]\geq \prod_{v\in V}\mathbb{E}[X_v].$$
Here $\mathbb{E}[X_v]$ is the probability that $v$ is in a weakly induced tree. Suppose that $g(G)\geq 2k+1$, and let us choose $R=k-1$, then the $R$-neighborhood of any vertex in $G$ is an induced  tree. The probability that $v$ is in a weakly induced tree is clearly bigger than the probability that in $A$ there is no path between $v$ and a vertex at distance $R$. Next we examine this probability. 

Before estimating the above probability, let us consider the case when we have a  $2$-ary tree of depth $t$, that is the root vertex has degree $2$ and all other non-leaf vertices have degree $3$, and all leaves are of distance $t$ from the root vertex.

For a $2$-ary tree of depth $t$ let us consider a random subset of edges. The probability $p_t$ that the root vertex is connected to some leaf vertex in this random subset satisfies the recursion $p_t=p_{t-1}-\frac{1}{4}p_{t-1}^2$. Clearly, $p_t$ is a monotone decreasing sequence and the limit $q$ must satisfy $q=q-\frac{1}{4}q^2$, so $q=0$. 

The $R$-neighborhood of a vertex  $v$ in the graph $G$ is not exactly a $2$-ary tree, because $v$ has degree $3$ unlike the root vertex of a $2$-ary tree which has degree $2$. Still, we can upper bound the probability that in $A$ there is a path between $v$ and a vertex at distance $R$
by $3p_{R-1}$. Hence $\mathbb{E}[X_v]\geq 1-3p_{R-1}$. So we have
$$\mathbb{P}(A\ \mbox{is a weakly induced forest})\geq (1-3p_{R-1})^{v(G)}.$$
Then using the trivial upper bound and the lower bound obtained now we get that
$$2\sqrt{2}\geq F_{wi}(G)^{1/v(G)}\geq 2\sqrt{2}(1-3p_{R-1}).$$
Since $p_{R}\to 0$ as $n\to\infty$ we get that
$$\lim_{n\to \infty}F_{wi}(G_n)^{1/v(G_n)}=2\sqrt{2}.$$
\end{proof}

\begin{Rem} Since subgraphs free of broken cycles are already decreasing it would have been enough to consider $a(G)$, but from the point of view of the theorem, it was a bit more convenient and natural to use weakly induced forests.

\end{Rem}

\section{Proof of Proposition~\ref{average-forest}}

In this section we prove Proposition~\ref{average-forest}.

\begin{proof}[Proof of Proposition~\ref{average-forest}]
Let $n$ be the number of vertices and let $m=\overline{d}n/2$ be the number of edges. Since a forest has at most $n-1$ edges we have
$$F(G)\leq \sum_{r=0}^{n-1}\binom{m}{r}\leq \exp\left(m H\left(\frac{n-1}{m}\right)\right)\leq \exp\left(m H\left(\frac{n}{m}\right)\right),$$
where we used the fact that $H(x)$ is monotone increasing for $0<x<1/2$.
Hence
$$\frac{1}{v(G)}\ln F(G)\leq \frac{\overline{d}}{2}H\left(\frac{2}{\overline{d}}\right).$$

\end{proof}

\section{Proof of Theorem~\ref{4-regular-forest}}

In this section we prove Theorem~\ref{4-regular-forest}. The following lemma is not crucial, but will turn out to be useful at some point to avoid certain technical difficulties.

\begin{Lemma} \label{special cover} Let $G$ be a graph and $e=(u,v)\in E(G)$. Let us consider the graph $H$ obtained from $G\cup G$ in such a way that we delete the two copies $(u_1,v_1)$ and $(u_2,v_2)$ of the edge $e$ and add the edges $(u_1,v_2)$ and $(u_2,v_1)$. Then
$$F(G)^2\leq F(H).$$
\end{Lemma}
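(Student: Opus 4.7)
Proof plan: I will construct an explicit injection $\Phi$ from ordered pairs of forests of $G$ to forests of $H$. Given $(F_1,F_2)$, place $F_1\setminus\{e\}$ in the first copy of $G-e$ sitting inside $H$, place $F_2\setminus\{e\}$ in the second copy, and include the crossed edge $e_1=(u_1,v_2)$ precisely when $e\in F_1$, and include $e_2=(u_2,v_1)$ precisely when $e\in F_2$.

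First I would verify that $\Phi(F_1,F_2)$ is always a forest of $H$. The two subforests of $G-e$ sitting in the two copies are vertex-disjoint, hence their union is acyclic. When exactly one crossed edge is added, it joins vertices in different copies and so merges two components instead of creating a cycle. The delicate case is when both $e_1$ and $e_2$ are added, which occurs precisely when $e\in F_1\cap F_2$. Since $e$ is then a bridge of the forest $F_i$, the subgraph $F_i\setminus\{e\}$ splits so that $u_i$ and $v_i$ lie in distinct components $A_i,B_i$ of copy $i$. The four components $A_1,B_1,A_2,B_2$ are pairwise distinct; the edge $e_1$ merges $A_1$ with $B_2$ and the edge $e_2$ merges $A_2$ with $B_1$, producing two combined components but no cycle.

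Next I would check injectivity. The intersection $\Phi(F_1,F_2)\cap\{e_1,e_2\}$ records whether $e\in F_1$ and $e\in F_2$ respectively, while restricting $\Phi(F_1,F_2)$ to the edges of copy $i$ of $G-e$ recovers $F_i\setminus\{e\}$; together these determine $(F_1,F_2)$ uniquely, so $F(G)^2\leq F(H)$.

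The main obstacle is the acyclicity check in the fourth case, for which the key observation is that $e$ is a bridge inside the forest $F_i$, so its endpoints are separated in $F_i\setminus\{e\}$. An alternative route is deletion-contraction on $e_1$ and $e_2$ in $H$: writing $A$ for the number of forests of $G-e$ containing a $u$-$v$ path and $B$ for the number not containing such a path, a short computation (three of the four resulting terms equal $(A+B)^2$, and the doubly-contracted one equals $2AB+B^2$) yields $F(H)=3(A+B)^2+2AB+B^2$ and $F(G)=A+2B$, so $F(H)-F(G)^2=2A(A+2B)\ge 0$, with equality exactly when $e$ is a bridge of $G$.
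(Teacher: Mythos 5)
Your proposal is correct. Your primary argument (the explicit injection $\Phi$) is genuinely different from the paper's proof: the paper never constructs a map but instead computes $F(H)$ exactly, writing $F(G)=f_1+f_2$ where $f_1$ counts forests of $G-e$ and $f_2$ counts those forests $S$ with $S\cup\{e\}$ still a forest, and then showing $F(H)=3f_1^2+2f_1f_2-f_2^2\geq (f_1+f_2)^2$. Your deletion--contraction ``alternative route'' is in fact exactly this computation in the variables $A=f_1-f_2$, $B=f_2$ (your $3(A+B)^2+2AB+B^2$ is the paper's $3f_1^2+2f_1f_2-f_2^2$, and your identification of the doubly-contracted term with $2AB+B^2$ matches the paper's count $f_1^2-(f_1-f_2)^2$ of forests of $H$ containing both crossed edges). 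The injection buys combinatorial transparency: the only point requiring care is acyclicity when both crossed edges are present, and your observation that $e$, being an edge of a forest, separates $u_i$ from $v_i$ in $F_i\setminus\{e\}$, so that the two crossed edges merge two disjoint pairs among four distinct components, settles it cleanly; injectivity is immediate since the crossed edges record membership of $e$ in each coordinate. What the counting approach buys in exchange is the exact deficit $F(H)-F(G)^2=2A(A+2B)$ and hence the precise equality condition (equality iff $e$ is a bridge), which the bare injection does not directly reveal. Either argument is a complete and valid proof of the lemma.
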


\begin{Rem} If $G$ is connected and $e=(u,v)\in E(G)$ is not a cut edge, then $H$ is connected too. Note that $H$ is a very special $2$-cover of $G$.
\end{Rem}

\begin{proof} Let
$$F_1=\left\{S\subseteq E(G)\setminus \{e\}\ |\ S\mbox{\ is a forest}\right\}\ \ \mbox{and}\ \ F_2=\left\{S\subseteq E(G)\setminus \{e\}\ |\ S\cup \{e\}\mbox{\ is a forest}\right\}.$$
Then $|F_1|\geq |F_2|$ and $F(G)=|F_1|+|F_2|$. Set $|F_1|=f_1$ and $|F_2|=f_2$. Note that $F(H)=3f_1^2+(f_1^2-(f_1-f_2)^2)=3f_1^2+2f_1f_2-f_2^2$ since if we add at most one of the edges of $(u_1,v_2)$ and $(u_2,v_1)$, then there are $f_1^2$ ways to add a subset of the edges such that it will be a forest, and if we add both edges, then the only bad case that we add sets $S_1,S_2\in F_1\setminus F_2$ in the two copies of $G$. Since $3f_1^2+2f_1f_2-f_2^2\geq (f_1+f_2)^2$ we are done.

\end{proof}

Now we are ready to prove Theorem~\ref{4-regular-forest}.

\begin{proof}[Proof of Theorem~\ref{4-regular-forest}]
First, let us assume that  $G$ is connected.
The idea is to bound the number of forests according to the number of edges. If the number of edges of the forest is at most $(1-\varepsilon)n$, then the number of forests is at most
$$\sum_{k=0}^{(1-\varepsilon)n}\binom{2n}{k}\leq \exp\left(2nH\left(\frac{1-\varepsilon}{2}\right)\right).$$
If the number of edges is at least $(1-\varepsilon)n$, then we can get it from a spanning tree by deleting at most $\varepsilon n$ edges. Hence the number of such forests is at most
$$\tau(G)\sum_{k=0}^{\varepsilon n}\binom{n-1}{k}\leq \tau(G)\exp(nH(\varepsilon)),$$
where $\tau(G)$ is the number of spanning trees.

We will use the theorem of McKay \cite{McKay1} claiming that the number of spanning trees of a $d$-regular graph is at most
$$\tau(G)\leq \frac{c_d\ln n}{n}\left(\frac{(d-1)^{d-1}}{(d^2-2d)^{d/2-1}}\right)^n.$$
For us $d=4$, so
$$\tau(G)\leq \frac{c_4\ln n}{n}\left(\frac{27}{8}\right)^n.$$
Hence
$$F(G)\leq \exp\left(2nH\left(\frac{1-\varepsilon}{2}\right)\right)+\frac{c_4\ln n}{n}\left(\frac{27}{8}\right)^n\exp(nH(\varepsilon)).$$
By choosing $\varepsilon=0.04$ we get that $F(G)\leq C\cdot 3.994^n$, where $C$ is some absolute constant.
Next, we show that this statement is true for all $4$-regular connected graphs without $C$, that is, $F(G)\leq 3.994^n$. Let
$$M=\sup_{G\in \mathcal{G}^c_4} \frac{F(G)}{3.994^{v(G)}},$$
where the supremum is taken over all $4$-regular connected  graphs. We know that $M\leq C$. Let $G$ be an arbitrary $4$-regular connected graph. Now let $H$ be the special $2$-cover described in Lemma~\ref{special cover} such that $H$ is connected too. Then
$$F(G)^2\leq F(H)\leq M\cdot 3.994^{v(H)}=M\cdot 3.994^{2v(G)},$$
whence $F(G)\leq \sqrt{M}\cdot 3.994^{v(G)}$. Since $G$ was arbitrary we get that $M\leq \sqrt{M}$, hence $M\leq 1$.
Hence $F(G)^{1/v(G)}\leq 3.994$ for all $4$-regular connected graphs. Since $F(\bigcup G_i)=\prod F(G_i)$, the same inequality is true for disconnected graphs.

\end{proof}

\section{Proof of Theorem~\ref{forest-cover}}

In this section we prove Theorem~\ref{forest-cover}. We first need a lemma.

\begin{Lemma} \label{equivalent inequalities}
Let $S$ be a finite set, and let $\mu$ be a probability distribution on the subsets of $S$. Let $x,y$ be fixed elements of $S$. Let $\textbf{S}$ be a random subset of $S$ according to the distribution $\mu$. Then the following inequalities are equivalent
$$\P_{\mu}(x,y\in \textbf{S})\leq \P_{\mu}(x\in \textbf{S})\P_{\mu}(y\in \textbf{S}),$$
$$\P_{\mu}(x,y\notin \textbf{S})\leq \P_{\mu}(x\notin \textbf{S})\P_{\mu}(y\notin \textbf{S}),$$
$$\P_{\mu}(x,y\in \textbf{S})\P_{\mu}(x,y\notin \textbf{S})\leq \P_{\mu}(x\in \textbf{S},y\notin \textbf{S})\P_{\mu}(x\notin \textbf{S},y\in \textbf{S}).$$
\end{Lemma}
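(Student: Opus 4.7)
The plan is to reduce all three inequalities to a single symmetric inequality between four numbers. Write $p_{ij} = \P_{\mu}(\mathbf{1}_{x\in\textbf{S}}=i,\, \mathbf{1}_{y\in\textbf{S}}=j)$ for $i,j\in\{0,1\}$, so that $p_{00}+p_{01}+p_{10}+p_{11}=1$ and the marginals decompose as $\P_{\mu}(x\in\textbf{S}) = p_{10}+p_{11}$, $\P_{\mu}(y\in\textbf{S}) = p_{01}+p_{11}$, $\P_{\mu}(x\notin\textbf{S}) = p_{00}+p_{01}$, $\P_{\mu}(y\notin\textbf{S}) = p_{00}+p_{10}$. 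With this notation the third inequality (c) is literally $p_{11}p_{00} \leq p_{10}p_{01}$, so the lemma becomes the claim that (a) and (b) are each equivalent to this single inequality.

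For (a), I would expand
$$\P_{\mu}(x\in\textbf{S})\P_{\mu}(y\in\textbf{S})-\P_{\mu}(x,y\in\textbf{S}) = (p_{10}+p_{11})(p_{01}+p_{11}) - p_{11},$$
and use $p_{11}(p_{10}+p_{01}+p_{11}-1) = -p_{11}p_{00}$ to rewrite the right-hand side as $p_{10}p_{01} - p_{11}p_{00}$. Hence (a) holds iff $p_{11}p_{00}\leq p_{10}p_{01}$. A completely symmetric computation, obtained by swapping the roles of ``in'' and ``not in'', shows
$$\P_{\mu}(x\notin\textbf{S})\P_{\mu}(y\notin\textbf{S}) - \P_{\mu}(x,y\notin\textbf{S}) = p_{10}p_{01} - p_{11}p_{00},$$
so (b) is equivalent to the same inequality.

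Since all three are equivalent to $p_{00}p_{11}\leq p_{01}p_{10}$, they are equivalent to one another, and the proof is complete. There is no real obstacle here: the only thing to keep track of is the single substitution $p_{10}+p_{01}+p_{11}-1 = -p_{00}$ (and its ``dual'' $p_{10}+p_{01}+p_{00}-1 = -p_{11}$) that converts each marginal product into the desired cross-product expression. I expect the whole argument to fit in a few lines of algebra.
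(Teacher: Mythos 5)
Your proposal is correct and follows essentially the same route as the paper: both arguments expand the marginals into the four joint probabilities and use $\sum_{i,j} p_{ij}=1$ to show that each of the first two inequalities reduces to $p_{00}p_{11}\leq p_{01}p_{10}$, which is the third. Your version is just a cleaner write-up (the paper handles the equivalence of the first and third inequalities explicitly and leaves the rest as ``similar'').
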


\begin{proof}
We prove the equivalence of the first and third inequalities, the rest is similar. 
$$\P_{\mu}(x\in \textbf{S})=\P_{\mu}(x\in \textbf{S},y\in \textbf{S})+\P_{\mu}(x\in \textbf{S},y\notin \textbf{S})$$
and
$$\P_{\mu}(y\in \textbf{S})=\P_{\mu}(x\in \textbf{S},y\in \textbf{S})+\P_{\mu}(x\notin \textbf{S},y\in \textbf{S}).$$
Furthermore,
$$\P_{\mu}(x\in \textbf{S},y\in \textbf{S})=\P_{\mu}(x\in \textbf{S},y\in \textbf{S})\cdot 1=$$
$$\P_{\mu}(x\in \textbf{S},y\in \textbf{S})(\P_{\mu}(x\in \textbf{S},y\in \textbf{S})+\P_{\mu}(x\in \textbf{S},y\notin \textbf{S})+\P_{\mu}(x\notin \textbf{S},y\in \textbf{S})+\P_{\mu}(x\notin \textbf{S},y\notin \textbf{S})).$$
Now writing the above identities into $\P_{\mu}(x,y\in \textbf{S})\leq \P_{\mu}(x\in \textbf{S})\P_{\mu}(y\in \textbf{S}),$ and subtracting the identical terms we get the third inequality.
\end{proof}

In the forthcoming proof, we will apply Lemma~\ref{equivalent inequalities} and Conjecture~\ref{correlation-conjecture} to the set $S=E(H)$ and probability distribution $\mu$ which takes value $0$ on the non-forest subsets, and uniform on the forests.

The other tool that we need is the recursion
$$F(G)=F(G-e)+F(G/e)$$
that we already used in the proof of Proposition~\ref{product-forest}.
As we already noted the contraction may produce multiple edges so we necessarily  work in the class of graphs with multiple edges. Conjecture~\ref{correlation-conjecture} is expected to be true for graphs with multiple edges, in fact, it is expected to be true even for weighted graphs. 

Now we are ready to prove Theorem~\ref{forest-cover}.

\begin{proof}[Proof of Theorem~\ref{forest-cover}]
We prove the statement by induction on the number of edges. When $G$ is the empty graph on $n$ vertices, the claim is trivial. Let $e=(u,v)\in E(G)$, and let $e_1$ and $e_2$ be the $2$-lifts of $e$ in a $2$-cover $H$. For the sake of simplicity we also denote by $e_1$ and $e_2$ the $2$-lifts of $e$ in $G\cup G$. We decompose $F(H)$ according to the cases whether a forest contains $e_1$ and/or $e_2$:
$$F(H)=F_{e_1,e_2}(H)+F_{\overline{e}_1,e_2}(H)+F_{e_1,\overline{e}_2}(H)+F_{\overline{e}_1,\overline{e}_2}(H),$$
where the first term means that we count the number of forests containing both $e_1,e_2$, the second term counts the number of forests containing $e_2$, but not $e_1$, etc.
Similarly, we have
$$F(G\cup G)=F_{e_1,e_2}(G\cup G)+F_{\overline{e}_1,e_2}(G\cup G)+F_{e_1,\overline{e}_2}(G\cup G)+F_{\overline{e}_1,\overline{e}_2}(G\cup G).$$
Now observe that the terms $F_{\overline{e}_1,\overline{e}_2}(H)$ and $F_{\overline{e}_1,\overline{e}_2}(G\cup G)$ count the number of forests in $2$-covers of $G-e$, and by induction
$$F_{\overline{e}_1,\overline{e}_2}(H)\geq F_{\overline{e}_1,\overline{e}_2}(G\cup G).$$
Similarly, $F_{e_1,e_2}(H)=F(H/\{e_1,e_2\})$ and $H/\{e_1,e_2\}$ is isomorphic to a $2$-cover of $G/e$. Hence
$$F_{e_1,e_2}(H)\geq F_{e_1,e_2}(G\cup G).$$
Observe that by symmetry we have $F_{\overline{e}_1,e_2}(H)=F_{e_1,\overline{e}_2}(H)$. Let $\mathbf{F}$ be a random forest of $H$ chosen uniformly, and $\mathbb{P}_H$ be the  corresponding probability distribution. 
Note that with our previous notation we have 
$$\P_H(e_1\in \textbf{F},e_2\notin \textbf{F})=\frac{F_{e_1,\overline{e}_2}(H)}{F(H)}\ \ \ \mbox{and}\ \ \ \P_H(e_1\notin \textbf{F},e_2\in \textbf{F})=\frac{F_{\overline{e_1},e_2}(H)}{F(H)}.$$
Lemma~\ref{equivalent inequalities} shows that the negative correlation inequality of Conjecture~\ref{correlation-conjecture}, namely,
$$\P_H(e,f\in \textbf{F})\leq \P_H(e\in \textbf{F})\P_H(f\in \textbf{F})$$
is equivalent to
$$\P_H(e\in \textbf{F},f\notin \textbf{F})\P_H(e\notin \textbf{F},f\in \textbf{F})\geq \P_H(e\in \textbf{F},f\in \textbf{F})\P_H(e\notin \textbf{F},f\notin \textbf{F}).$$
In the following computation, we will apply this inequality to $e=e_1$ and $f=e_2$.
Then
\begin{align*}
F_{\overline{e_1},e_2}(H)^2&=F_{\overline{e}_1,e_2}(H)\cdot F_{e_1,\overline{e}_2}(H)\\
                           &=F(H)^2\P_H(e_1\in \textbf{F},e_2\notin \textbf{F})\P_H(e_1\notin \textbf{F},e_2\in \textbf{F})\\
													&\geq F(H)^2\P_H(e_1\in \textbf{F},e_2\in \textbf{F})\P_H(e_1\notin \textbf{F},e_2\notin \textbf{F})\\
                          &=F_{e_1,e_2}(H)F_{\overline{e}_1,\overline{e}_2}(H)\\
													&\geq F_{e_1,e_2}(G\cup G)F_{\overline{e}_1,\overline{e}_2}(G\cup G)\\
													&=F_{e_1}(G)F_{e_2}(G)F_{\overline{e}_1}(G)F_{\overline{e}_2}(G)\\
													&=F_{\overline{e}_1,e_2}(G\cup G)\cdot F_{e_1,\overline{e}_2}(G\cup G)\\
													&=F_{\overline{e}_1,e_2}(G\cup G)^2.
\end{align*}
Hence $F_{\overline{e}_1,e_2}(H)\geq F_{\overline{e}_1,e_2}(G\cup G)$. Putting together the $4$ inequalities we get that $F(H)\geq F(G\cup G)$.

\end{proof}

\section{Proof of Theorem~\ref{regular upper bounds}}

In this section, we give a new upper bound on the number of forests in regular graphs. We use some basic results from spectral graph theory such as the matrix-tree theorem and the expression of the number of closed walks as a power sum of the eigenvalues of the adjacency matrix. All these results can be found in the books of Brouwer and Haemers \cite{BrHa} and Godsil and Royle \cite{GoRo}.

\begin{Def}
Let $G$ be a graph with edge weights $w:E(G)\to \mathbb{R}$. Let $L(G,w)$ be the $|V|\times |V|$ matrix defined as follows:
$$L(G,w)_{i,j}=\left\{\begin{array} {cl}
\sum_{e: i\in e}w_e & \mbox{if}\ i=j, \\
-w_e &\mbox{if}\ e=(i,j)\in E(G),\\
0 &\mbox{if}\ (i,j)\notin E(G).
\end{array}
\right.$$
The matrix $L(G,w)$ is the weighted Laplacian matrix of the graph $G$.
\end{Def}

\begin{Lemma}[Kirchhoff's matrix-tree theorem \cite{Kirc,BrHa,GoRo}] \label{Kirchhoff}
Let $G$ be a graph with edge weights $w:E(G)\to \mathbb{R}$. Let $L(G,w)$ be its weighted  Laplacian matrix. Let $L_0(G,w)$ be the matrix obtained from $L(G,w)$ by deleting the first row and column. Then
$$\det L_0(G,w)=\sum_{T\in \mathcal{T}(G)}\prod_{e\in E(T)}w_e,$$
where $\mathcal{T}(G)$ is the set of spanning trees of $G$.

\end{Lemma}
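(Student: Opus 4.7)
My plan is to prove Kirchhoff's matrix-tree theorem via the Cauchy--Binet formula applied to the (signed) incidence matrix of $G$. The weights enter multiplicatively through a diagonal matrix, so the argument follows the same outline as the unweighted case.

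First I would fix an arbitrary orientation of each edge of $G$ and form the $|V| \times |E|$ signed incidence matrix $B$ with $B_{v,e} = +1$ if $v$ is the head of $e$, $B_{v,e} = -1$ if $v$ is the tail, and $B_{v,e} = 0$ otherwise. Let $W$ be the $|E|\times|E|$ diagonal matrix whose $(e,e)$-entry is $w_e$. A direct entrywise check shows $BWB^T = L(G,w)$: the diagonal entry at $v$ picks up $\sum_{e : v \in e} w_e$, while the $(i,j)$-entry with $i\neq j$ picks up $-\sum_{e=(i,j)} w_e$ because the two nonzero entries of column $e$ in $B$ have opposite signs. Deleting the first row of $B$ to form $B_0$ gives $B_0 W B_0^T = L_0(G,w)$.

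Next I would apply the Cauchy--Binet formula to the factorization $L_0(G,w) = B_0 \cdot (W B_0^T)$. Writing $n = |V|$ and letting $S$ range over $(n-1)$-element subsets of $E(G)$, Cauchy--Binet yields
\[
\det L_0(G,w) = \sum_{S} \det\bigl(B_0[:,S]\bigr)\, \det\bigl((WB_0^T)[S,:]\bigr) = \sum_{S} \Bigl(\prod_{e \in S} w_e\Bigr) \det\bigl(B_0[:,S]\bigr)^2,
\]
after pulling the weights out of the rows of $WB_0^T[S,:]$.

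The combinatorial heart of the argument is then the claim that $\det(B_0[:,S]) \in \{-1, 0, +1\}$, with $\det(B_0[:,S]) = \pm 1$ precisely when the edge set $S$ forms a spanning tree of $G$. If $S$ contains a cycle, then assigning $\pm 1$ coefficients to the columns of $B[:,S]$ according to the cycle orientation produces a linear dependence surviving in $B_0[:,S]$, so the determinant vanishes. Conversely, if $S$ is a spanning tree, I would argue by induction on $n$: the tree has a leaf $v \neq v_1$ (where $v_1$ is the deleted vertex), whose unique incident edge in $S$ contributes a single $\pm 1$ entry in its row of $B_0[:,S]$; cofactor expansion along this row reduces to the incidence minor of $S$ minus that leaf in $G$ minus that leaf, which is again a spanning tree of a smaller graph. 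The base case $n=1$ is trivial. An edge set $S$ of size $n-1$ that is neither a spanning tree nor cycle-free is impossible, so the two cases above exhaust all $S$.

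Substituting this evaluation of the minors into the Cauchy--Binet expansion leaves only the terms indexed by spanning trees, each contributing $\prod_{e\in E(T)} w_e$, which is exactly the desired formula. The only mildly delicate step is the claim about the incidence minors; everything else is routine linear algebra. I do not view it as a serious obstacle since the leaf-induction proof is standard, but it is the place where the combinatorial content (the fact that spanning trees are exactly the maximal acyclic $(n-1)$-edge subsets) enters.
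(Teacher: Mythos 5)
Your proof is correct. The paper does not actually prove this lemma---it is stated as a classical result and cited to Kirchhoff and to the textbooks of Brouwer--Haemers and Godsil--Royle---so there is no internal argument to compare against; your Cauchy--Binet derivation is precisely the standard proof given in those references. All the steps check out: the factorization $L_0(G,w)=B_0WB_0^T$, the extraction of $\prod_{e\in S}w_e$ from the rows of $(WB_0^T)[S,:]$, the vanishing of the incidence minor when $S$ contains a cycle, and the leaf-induction showing the minor is $\pm 1$ when $S$ is a spanning tree (any tree on at least two vertices has a leaf distinct from the deleted vertex $v_1$, so the induction never stalls). The case analysis is exhaustive since an acyclic set of $n-1$ edges on $n$ vertices is necessarily a spanning tree.
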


Let $G$ be a $d$-regular graph with Laplacian matrix $L(G,\underline{1})$. Let us add one more vertex to $G$, and connect it to all vertices with an edge of weight $\alpha$. Let $G_{\alpha}$ be the new graph. The original edges have weight $1$, and a weight of a spanning tree of the graph $G_{\alpha}$ is the product of the weights of the edges in the spanning tree.
Then the weighted sum of the spanning trees of $G_{\alpha}$ can be computed as a weighted sum of the forests of the original graph $G$ as follows. The total weight of spanning trees in $G_{\alpha}$, which correspond to a forest $F$ in $G$ with connected components $F_1,F_2,\dots ,F_k$, is
$$w_{\alpha}(F)=\alpha^k\prod_{i=1}^k|F_i|.$$
Indeed, once we have a forest of $G$ we can create $\prod_{i=1}^k|F_i|$ spanning trees of $G_{\alpha}$ by connecting one of the vertices of each component to the new vertex. Each such spanning tree has a weight $\alpha^k$.

Let $k(F)$ denote the number of connected components of a forest $F$. By the above discussion we get that for each $\alpha>0$ we have $w_{\alpha}(F)(1/\alpha)^{k(F)}\geq 1$.
Let us use the matrix-tree theorem (Lemma~\ref{Kirchhoff}):
we have $L_0(G_{\alpha},w)=L(G,\underline{1})+\alpha I$, where $I$ is the identity matrix. Then
$$S:=\sum_Fw_{\alpha}(F)=\det(L(G,\underline{1})+\alpha I)=\prod_{i=1}^n(\lambda_i+\alpha),$$
where $\lambda_i$ are the  eigenvalues of the matrix $L(G,\underline{1})$. If $d=\mu_1\geq \mu_2\geq \dots \geq \mu_n$ are the eigenvalues of the adjacency matrix of the graph $G$, then $\lambda_i=d-\mu_i$. Hence
$$\frac{1}{n}\ln S=\frac{1}{n}\sum_{i=1}^n\ln(\lambda_i+\alpha)=\frac{1}{n}\sum_{i=1}^n\ln(d-\mu_i+\alpha).$$
Now we can estimate this sum as follows. In the following computation $\mu_{KM}$ is the Kesten-McKay measure \cite{McKay1,McKay2}. Its explicit form is given by the density function
$$\frac{d\sqrt{4(d-1)-x^2}}{2\pi(d^2-x^2)}\cdot 1_{(-2\sqrt{d-1},2\sqrt{d-1})}.$$
Its speciality is that
$$W_k(\mathbb{T}_d,o):=\int x^k\, d\mu_{KM}(x)$$
is equal to the number of closed walks of length $k$ in the infinite $d$-regular tree from a fixed root vertex $o$.
The quantity $W_k(G)=\sum_{i=1}^n\mu_i^k$ counts the number of closed walks of length $k$ in the graph $G$. The following standard argument shows that $W_k(G)\geq nW_k(\mathbb{T}_d,o)$ since $\mathbb{T}_d$ is the universal cover of any $d$-regular graph $G$. We show that for any vertex $v$, the number of closed walks $W_{k}(G,v)$ of length $k$ starting and ending at vertex $v$ is at least as large as the number of closed walks starting and ending at some root vertex of the infinite $d$-regular tree $\mathbb{T}_d$. Let us consider the following infinite $d$-regular tree, its vertices are labeled by the walks starting at the vertex $v$ which never steps immediately back to a vertex, where it came from. Such walks are called non-backtracking walks. For instance, in the depicted graph below $149831$ is such a walk, but $1494$ is not a backtracking walk since after $9$ we immediately stepped back to $4$. We connect two non-backtracking walks in the tree if one of them is a one-step extension of the other.

\begin{figure}[h!] \label{universal cover picture}
\begin{center}
\scalebox{.16}{\includegraphics{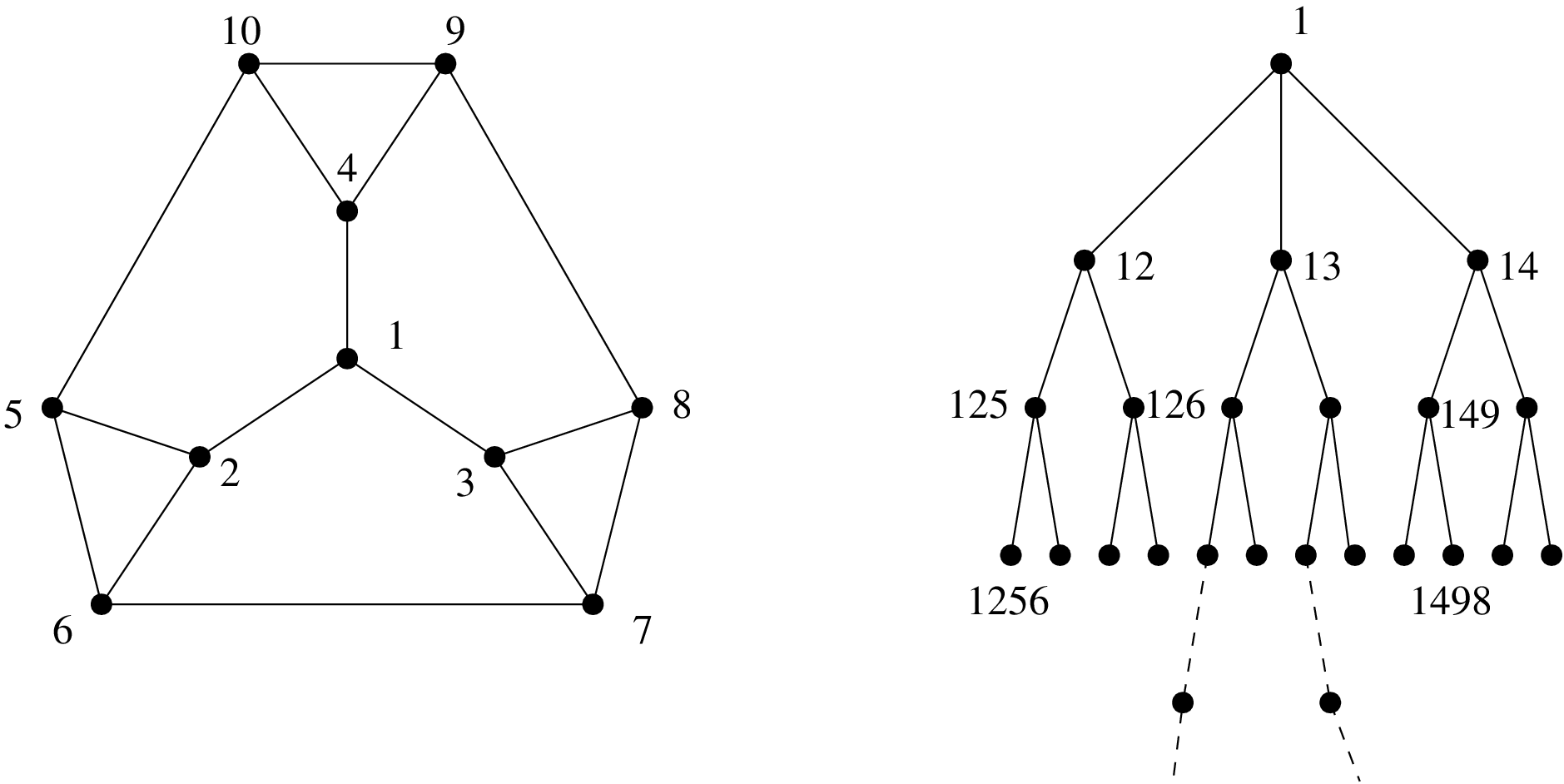}}
\end{center}
\end{figure}

Note that every closed walk in the tree corresponds to a closed walk in the graph $G$: for instance, for the depicted graph the walk $1,14,149,14,1$ in the tree corresponds to the walk $1,4,9,4,1$ in the graph.  On the other hand, there are closed walks in the graph $G$, like $149831$, which are not closed anymore in the tree. This argument shows that $W_k(G,v)\geq W_k(\mathbb{T}_d,o)$ for all $v\in V(G)$. Consequently, $W_k(G)\geq nW_k(\mathbb{T}_d,o)$.

Then
\begin{align*}
\frac{1}{n}\sum_{i=1}^n\ln(d-\mu_i+\alpha)&=\ln(d+\alpha)+\frac{1}{n}\sum_{i=1}^n\ln\left(1-\frac{\mu_i}{d+\alpha}\right)\\
&=\ln(d+\alpha)+\frac{1}{n}\sum_{i=1}^n\sum_{k=1}^{\infty}-\frac{1}{k}\left(\frac{\mu_i}{d+\alpha}\right)^k\\
&=\ln(d+\alpha)-\sum_{k=1}^{\infty}\frac{1}{k(d+\alpha)^k}\frac{1}{n}\sum_{i=1}^n\mu_i^k\\
&=\ln(d+\alpha)-\sum_{k=1}^{\infty}\frac{1}{k(d+\alpha)^k}\frac{W_k(G)}{n}\\
&\leq \ln(d+\alpha)-\sum_{k=1}^{\infty}\frac{1}{k(d+\alpha)^k}W_k(\mathbb{T}_d,o)\\
&=\ln(d+\alpha)-\sum_{k=1}^{\infty}\frac{1}{k(d+\alpha)^k}\int x^k\, d\mu_{KM}(x)\\
&=\ln(d+\alpha)+\int \ln\left(1-\frac{x}{d+\alpha}\right)\, d\mu_{KM}(x)\\
&=\int \ln(d+\alpha-x)\, d\mu_{KM}(x).
\end{align*}
Hence
$$S\leq \exp\left(n \int \ln(d+\alpha-x)\, d\mu_{KM}(x)\right).$$
When $\alpha=1$, then $w_1(F)\geq 1$, and  we get back the result of Kahale and Schulman (actually they used almost the same argument for acyclic orientations instead of forests). When $\alpha =1/2$, then we get that  the number of forests without isolated vertices, denoted by $F_1(G)$, satisfies
$$F_1(G)\leq \exp\left(n \int \ln\left(d+\frac{1}{2}-x\right)\, d\mu_{KM}(x)\right)<d^n,$$
since $w_{1/2}(F)\geq 1$ for forests without isolated vertices.

\begin{Rem} One can explicitly compute the above integrals using a theorem of McKay \cite{McKay1}. For $|\gamma|<\frac{1}{2\sqrt{d-1}}$ let
$$J_d(\gamma)=\int \ln(1-\gamma x) \, d\mu_{KM}(x).$$
Let
$$\eta=\frac{1-(1-4(d-1)\gamma^2)^{1/2}}{2(d-1)\gamma^2}.$$
Then
$$J_d(\gamma)=-\ln \left(\eta \left(\frac{d-\eta}{d-1}\right)^{(d-2)/2}\right).$$
Clearly, one needs to use that
$$\int \ln(d+\alpha-x)\, d\mu_{KM}(x)=\ln(d+\alpha)+J_d\left(\frac{1}{d+\alpha}\right).$$
This is how we got the explicit bound in Theorem~\ref{KS-bound} that does not appear in the original paper of Kahale and Schulman \cite{KaSc}.
\end{Rem}

Now we are ready to improve  the result of Kahale and Schulman \cite{KaSc}.

\begin{proof}
Let $G$ be a $d$-regular graph on $n$ vertices. Then the number of edges is $m=dn/2$. Let $F_1$ be the number of forests of $G$ with at most $cn$ connected components, where $c$ is some constant that we will choose later. Let $F_2$ be the number of forests of $G$ with more than $cn$ connected components. In this latter case the number of edges of the forest is at most $e(F)=n-k(F)\leq (1-c)n$. Then
$$F_1\leq \exp\left(n \int \ln(d+\alpha-x)\, d\mu_{KM}(x)\right) \cdot \left(\frac{1}{\alpha}\right)^{cn}$$
since $F_1\leq \sum_{F}w_{\alpha}(F)\left(\frac{1}{\alpha}\right)^{k(F)}\leq \sum_{F}w_{\alpha}(F)\left(\frac{1}{\alpha}\right)^{cn}$.
For $F_2$ we use the trivial bound based on the fact that such a forest has at most $(1-c)n$ edges.
$$F_2\leq \sum_{k=0}^{(1-c)n}\binom{m}{k}\leq \exp\left(n\frac{d}{2}H\left(\frac{2(1-c)}{d}\right)\right).$$
Hence
$$F(G)\leq \exp\left(n \int \ln(d+\alpha-x)\, d\mu_{KM}(x)\right) \cdot \left(\frac{1}{\alpha}\right)^{cn}+
\exp\left(n\frac{d}{2}H\left(\frac{2(1-c)}{d}\right)\right).$$
Next, we choose $\alpha$ and $c$ to make the two terms (approximately) the same, then we arrive at the bound
$F(G)\leq 2C_d^n$. If we cannot find such $c$, then we can still use $c=1$ to recover the bound of Kahale and Schulman (KS-bound in the table below) that corresponds to the choice $\alpha=1, c=1$. Besides, one can get rid of the constant $2$ by the trick used in the proof of Theorem~\ref{4-regular-forest}. The value of $C_d$ is given in the table below. As we see we get a much better bound for $d=5,6,7,8$, and a slightly better bound for $d=9$.
\bigskip

\begin{center}
\begin{table}[h!] \label{table forest2}
\caption{Bounds on the number of forests for small $d$}

\begin{tabular}{|c|c|c|c|c|}  \hline
$d$ & new bound $C_d$ & KS-bound & $\alpha$ & $c$ \\ \hline
$5$   & $5.1965$           &   $5.5361$  & $0.3084$ & $0.0739$ \\ \hline
$6$   & $6.3367$           &   $6.5286$  & $0.4482$ & $0.0835$ \\ \hline
$7$   & $7.4290$           &   $7.5235$  & $0.5917$ & $0.0903$ \\ \hline
$8$   & $8.4843$           &   $8.5200$  & $0.7374$ & $0.0955$ \\ \hline
$9$   & $9.5116$           &   $9.5173$  & $0.8844$ & $0.0995$ \\ \hline
\end{tabular}

\end{table}
\end{center}

\end{proof}

\section{Proof of Theorem~\ref{regular-connected}}

In this section we prove Theorem~\ref{regular-connected}. Note that for a bipartite graph $G=(A,B,E)$ there is a strikingly simple argument giving
$$C(G)^{1/v(G)}\leq 2^{d/2}\left(1-\frac{1}{2^d}\right)^{1/2}.$$
Indeed, if we consider the probability that a random edge subset spans a connected graph, then this probability is clearly smaller than the probability that on one side of bipartition none of the vertices are isolated. Let $S$ be a random subset of the edge set. If $B_v$ is the bad event that the vertex $v\in A$ is isolated, then
$$\P(S\ \mbox{spans a connected subgraph})\leq \P(\bigcap_{v\in A} \overline{B_v})=\prod_{v\in A}\P(\overline{B_v})=\left(1-\frac{1}{2^d}\right)^{v(G)/2}.$$
We used the fact that the events $B_v$ for $v\in A$ are independent. This gives the above inequality.

In the general case, there is no independence for all vertices. (Though we can take a large independent set of the vertex set instead of the set $A$.) Nevertheless, we can easily overcome it by using one of
Janson's inequalities. This needs some preparation.
\bigskip

\textbf{Setup of Janson's inequalities.} Let $\Omega$ be a fixed set and let $R$ be a random subset of $\Omega$ by choosing $r\in R$ with probability $p_r$ mutually independently of each other. Let $(A_i)_{i\in I}$ be subsets of $\Omega$ for some index set $I$. Let $B_i$ be the event that $A_i\subseteq R$. Let $X_i$ be the indicator random variable for the event $B_i$. Let
$$X:=\sum_{i\in I}X_i.$$
It is, of course, the number of $A_i\subseteq R$. So the events $\bigcap_{i\in I}\overline{B_i}$ and $X=0$  are identical. For $i,j\in I$ we say that $i\sim j$ if $A_i\cap A_j\neq \emptyset$. Note that if $i\not\sim j$ then this is consistent with our previous notation that $B_i$ and $B_j$ are  independent. Let
$$\Delta=\sum_{i\sim j}\P \left(B_i\cap B_j\right),$$
where the sum is over all ordered pairs, so $\Delta/2$ is the same sum for unordered pairs. Set
$$M=\prod_{i\in I}\P\left(\overline{B_i}\right).$$
This would be the probability of $\bigcap_{i\in I}\overline{B_i}$ if the events $B_i$ were independent. Finally, set
$$\mu=\E X=\sum_{i\in I}\P(B_i).$$
Now we are ready to state Janson's inequalities.

\begin{Th}[Janson inequality \cite{JLR},\cite{AlSp}] \label{janson1} Let $(B_i)_{i\in I},M,\Delta,\mu$ be as above, and assume that $\P(B_i)\leq \varepsilon$ for all $i\in I$. Then
$$M\leq \P \left(\bigcap_{i\in I}\overline{B_i}\right)\leq M \exp \left(\frac{1}{1-\varepsilon}\cdot \frac{\Delta}{2}\right).$$
\end{Th}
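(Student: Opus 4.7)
The two sides of the stated inequality are of very different difficulty, and I would tackle them separately.

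The lower bound $M\leq \P(\bigcap_{i\in I}\overline{B_i})$ is essentially Harris--FKG: equipping $\{0,1\}^{\Omega}$ with the product measure determined by the $p_r$, each event $\overline{B_i}$ is monotone decreasing in the coordinatewise order, and Harris's correlation inequality for independent coordinates says any collection of decreasing events on a product space is positively correlated. A straightforward induction on $|I|$ then yields $\P(\bigcap_i\overline{B_i})\geq \prod_i\P(\overline{B_i})=M$.

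For the upper bound the plan is to enumerate $I=\{1,\dots,m\}$, write
$$\P\Bigl(\bigcap_{i=1}^m\overline{B_i}\Bigr)=\prod_{j=1}^m\P(\overline{B_j}\mid C_j)\qquad \text{with}\ C_j=\bigcap_{i<j}\overline{B_i},$$
and control each conditional factor. By FKG applied in the reverse direction, $\P(\overline{B_j}\mid C_j)\geq \P(\overline{B_j})$, so the task is to \emph{upper} bound how much conditioning on $C_j$ inflates $\P(\overline{B_j})$. After taking logarithms in the telescoping product, the elementary estimate $\log(1-y)-\log(1-x)\leq (x-y)/(1-\varepsilon)$ for $0\leq y\leq x\leq\varepsilon$ reduces the problem to proving a per-index estimate of the form
$$\P(B_j)-\P(B_j\mid C_j)\;\leq\;\sum_{\substack{i<j\\ i\sim j}}\P(B_i\cap B_j),$$
since summing over $j$ recovers exactly $\Delta/2$ and the factor $1/(1-\varepsilon)$ in the exponent comes out of the logarithm inequality.

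The per-index estimate is the heart of the matter, and I would prove it by splitting the past into $D_j=\{i<j:i\sim j\}$ and $E_j=\{i<j:i\not\sim j\}$, setting $D=\bigcap_{i\in D_j}\overline{B_i}$ and $E=\bigcap_{i\in E_j}\overline{B_i}$, and exploiting the crucial structural fact that $B_j$ is \emph{independent} of $E$, because each $A_i$ with $i\in E_j$ uses coordinates disjoint from those of $A_j$. The resulting identity
$$\P(B_j\mid C_j)=\P(B_j)\cdot\frac{\P(D\mid B_j,E)}{\P(D\mid E)}$$
makes both the FKG direction (conditional Harris inside the space conditioned on $E$) and the sought lower bound on $\P(B_j\mid C_j)$ visible: the deficit $\P(D\mid E)-\P(D\mid B_j,E)\leq \P(D^c\mid B_j,E)$ is estimated by a union bound over $i\in D_j$ and then translated back to the unconditional $\P(B_i\cap B_j)$ using the independence of $E$ from both $B_j$ and the individual $B_i$.

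The hard part I expect is exactly this final translation: passing from conditional probabilities like $\P(B_i\mid B_j,E)$ back to $\P(B_i\cap B_j)$ without inflating the normalizing constants. The hypothesis $\P(B_i)\leq\varepsilon$ is consumed precisely here, guaranteeing that the $\log$-linearization step above loses only a factor $1/(1-\varepsilon)$. A cleaner accounting is to do induction on $|D_j|$, peeling off one dependent event at a time and applying the elementary logarithm inequality to a single event at each step rather than to an entire union; this is the version I would actually write up.
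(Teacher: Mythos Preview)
The paper does not prove this theorem at all: Theorem~\ref{janson1} is quoted as a black box from \cite{JLR,AlSp} and then applied in the proof of Theorem~\ref{regular-connected}. So there is no ``paper's own proof'' to compare against; what you have written is a sketch of the standard textbook argument (essentially the Alon--Spencer proof), and at the level of a sketch it is correct.

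One small imprecision worth flagging. In the last step you say the conditional terms $\P(B_i\mid B_j,E)$ are ``translated back to the unconditional $\P(B_i\cap B_j)$ using the independence of $E$ from both $B_j$ and the individual $B_i$.'' Independence of $E$ from $B_j$ is genuine (the underlying sets $A_k$, $k\in E_j$, are disjoint from $A_j$), but $E$ is \emph{not} independent of $B_i$ for $i\in D_j$ in general, since $A_i$ may well overlap some $A_k$ with $k\in E_j$. What actually gives $\P(B_i\mid B_j,E)\leq \P(B_i\mid B_j)$ is another application of Harris/FKG in the product space conditioned on $B_j$: there $B_i$ is still increasing and $E$ is still decreasing. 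With that correction your per-index estimate
\[
\P(B_j\mid C_j)\ \geq\ \P(B_j)-\sum_{\substack{i<j\\ i\sim j}}\P(B_i\cap B_j)
\]
goes through, and the log-linearization with the $1/(1-\varepsilon)$ factor finishes the upper bound exactly as you describe.
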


\begin{proof}[Proof of Theorem~\ref{regular-connected}]
Now let $R\subseteq E(G)$ be a set chosen uniformly at random. For a vertex $v$ let $A_v$ be the set of edges incident to the vertex $v$. Let $B_v$ be the bad event that $A_v\subseteq R$. If one of $B_v$ occurs, then $E(G)\setminus R$ cannot be connected since the vertex $v$ would be an isolated vertex. We have
$$\P(B_v)=\frac{1}{2^d},$$
and
$$\Delta=\sum_{u\sim v}\P \left(B_u\cap B_v\right)=\frac{nd}{2^{2d-1}}.$$
Then using Janson's inequality with $\varepsilon=\frac{1}{2^d}$ we get that
$$C(G)\leq 2^{nd/2}\P \left(\bigcap_{v\in V}\overline{B_v}\right)\leq 2^{nd/2}\left(1-\frac{1}{2^d}\right)^n\exp\left(\frac{nd}{2^{d}(2^d-1)}\right).$$
Hence
$$C(G)^{1/n}\leq 2^{d/2}\left(1-\frac{1}{2^d}\right)\exp\left(\frac{d}{2^{d}(2^d-1)}\right).$$

\end{proof}

\section{Proof of Theorem~\ref{average-connected}}

In this section we prove Theorem~\ref{average-connected}.

\begin{proof}[Proof of Theorem~\ref{average-connected}]
Let us consider
$$Z_{\mathrm{RC}}(G,q,w)=\sum_{A\subseteq E(G)}q^{k(A)}w^{|A|}.$$
We show that if $0<q\leq 1$ and $w\geq 0$, then
$$Z_{\mathrm{RC}}(G,q,w)\leq q^{v(G)}\left(1+\frac{w}{q}\right)^{e(G)}.$$
Indeed,
$$q^{v(G)}\left(1+\frac{w}{q}\right)^{e(G)}=\sum_{A\subseteq E(G)}q^{v(G)}\left(\frac{w}{q}\right)^{|A|}.$$
So it is enough to show that
$$q^{k(A)}w^{|A|}\leq q^{v(G)}\left(\frac{w}{q}\right)^{|A|},$$
equivalently $1\leq q^{v(G)-k(A)-|A|}$ which is indeed true since $|A|+k(A)\geq v(G)$ and $0<q\leq 1$.
Let us apply this inequality to $q=\frac{\overline{d}-2}{2}$ and $w=1$. Note that $0<q\leq 1$ since $2<\overline{d}\leq 4$. Observe also that just by keeping the connected spanning subgraphs $A$ we have
$$qC(G)\leq  Z_{\mathrm{RC}}(G,q,1)\leq q^{v(G)}\left(1+\frac{1}{q}\right)^{e(G)}.$$
By substituting $q=\frac{\overline{d}-2}{2}$ and evaluating the right-hand side we get that
$$C(G)\leq \frac{2}{\overline{d}-2}\exp\left(v(G)\frac{\overline{d}}{2}H\left(\frac{2}{\overline{d}}\right)\right).$$
\end{proof}

\noindent \textbf{Acknowledgment.} The second author thanks Ferenc Bencs for the useful discussions. The authors are very grateful to the referees for their comments and suggestions. 
\bigskip



\begin{thebibliography}{99}

\bibitem{AlSp} N. Alon and J. Spencer: The probabilistic methods, Wiley, 2016

\bibitem{BrHa} A. E. Brouwer and W. Haemars: Spectra of graphs, Springer, 2011

\bibitem{CMNN} N. Calkin, C. Merino, S. Noble, and M. Noy: Improved bounds for the number of forests and acyclic orientations in the square lattice, The Electronic Journal of Combinatorics \textbf{10}(1) (2003)

\bibitem{ChSh1} C. Chang and R. Shrock: Asymptotic behavior of spanning forests and connected spanning subgraphs on two-dimensional lattices, International Journal of Modern Physics B34, 2050249 (2020)

\bibitem{ChSh2} C. Chang and R. Shrock: Exponential growth constants for spanning forests on Archimedean lattices: values and comparisons of upper bounds, arXiv:2012.13468



\bibitem{Csikv} P. Csikv\'ari: Lower matching conjecture, and a new proof of Schrijver's and Gurvits's theorems,
Journal of the European Mathematical Society \textbf{19}(6) (2017), pp. 1811--1844

\bibitem{FKG} C. M. Fortuin, R. W. Kasteleyn, and J. Ginibre: Correlation inequalities on some partially ordered sets, Commun. Math. Phys. \textbf{22} (1971), 89--103.

\bibitem{GoRo} C. Godsil and G. F. Royle: Algebraic graph theory, \textbf{207} (2001), Graduate texts in mathematics, Springer Science and Business Media

\bibitem{GW} G. R. Grimmett and S. N. Winkler: Negative association in uniform forests
and connected graphs. Random Structures and Algorithms \textbf{24} (2004), 444--460

\bibitem{JLR} S. Janson, T. Luczak, and A. Rucinski: An exponential bound for the
probability of nonexistence of a specified subgraph in a random graph, Institute
for Mathematics and its Applications (USA), 1988.

\bibitem{KaSc} N. Kahale and L. J. Schulman: Bounds on the chromatic polynomial and on the number of acyclic orientations of a graph, Combinatorica \textbf{16}(3) (1996), pp. 383--397

\bibitem{Kirc} G. Kirchhoff: \"Uber die Aufl\"osung der Gleichungen, auf welche man bei der Untersuchung der linearen Verteilung galvanischer Str\"ome gefuhrt wird, Ann. Phys. Chem. \textbf{72} (1847), pp. 497--508. (English transl. IRE Trans. Circuit Theory CT-5 (1958), 4-7.)

\bibitem{KlWi} D. J. Kleitman and K. J. Winston: Forests and score vectors, Combinatorica \textbf{1}(1) (1981), pp. 49--54

\bibitem{KoMo} L. Kozma and S. Moran: Shattering, graph orientations, and connectivity, The Electronic Journal of Combinatorics \textbf{20}(3) (2013) P44

\bibitem{Lin} N. Linial: Lifts of graphs, (talk slides), \url{http://www.cs.huji.ac.il/~nati/PAPERS/lifts_
talk.pdf}

\bibitem{McKay1} B. D. McKay: Spanning trees in regular graphs, European Journal of Combinatorics \textbf{4}(2), (1983), pp. 149--160

\bibitem{McKay2} B. D. McKay, The expected eigenvalue distribution of a large regular graph, Linear Algebra Appl. 40 (1981), pp. 203-216.

\bibitem{Mani} A. P. Mani: On some Tutte polynomial sequences in the square lattice, Journal of Combinatorial Theory Series B \textbf{102}(2) (2012), 436--453.

\bibitem{MeWe} C. Merino and D. J. A. Welsh: Forests, colorings and acyclic orientations of the square lattice, Annals of Combinatorics \textbf{3} (1999), pp. 417--429

\bibitem{Pem} R. Pemantle: Towards a theory of negative independence, Journal of Mathematical Physics \textbf{41} (2000), 1371--1390

\bibitem{Read} R. C. Read: An introduction to chromatic polynomials, Journal of Combinatorial Theory \textbf{4} (1968), pp. 52--71

\bibitem{Stan} R. P. Stanley: Acyclic orientations of graphs, Discrete Mathematics, \textbf{5}(2) (1973), pp. 171--178,

\bibitem{Thom} C. Thomassen: Spanning trees and orientations of graphs, Journal of Combinatorics \textbf{1}(2) (2010), pp. 101--111

\bibitem{Whit} H. Whitney: A logical expansion in mathematics, Bull. Amer. Math. Soc. \textbf{38}
(1932), pp. 572--579.

\end{thebibliography}
\end{document}